\newtheorem{theorem}{Theorem}[section]
\newtheorem{corollary}[theorem]{Corollary}
\newtheorem{lemma}[theorem]{Lemma}
\newtheorem{proposition}[theorem]{Proposition}
\newtheorem{cor}{Corollary}[section]
\theoremstyle{remark}
\theoremstyle{remark}
\theoremstyle{definition}
\newtheorem{defn}{Definition}[section]
\theoremstyle{definition}
\newcommand{\bmat}{\left[\begin{array}}
\newcommand{\emat}{\end{array}\right]}
\newcommand{\bt}{\begin{thm}}
\newcommand{\et}{\end{thm}}
\newcommand{\bp}{\begin{proof}}
\newcommand{\ep}{\end{proof}}
\newcommand{\bprop}{\begin{prop}}
\newcommand{\eprop}{\end{prop}}
\newcommand{\bl}{\begin{lemma}}
\newcommand{\el}{\end{lemma}}
\newcommand{\bc}{\begin{cor}}
\newcommand{\ec}{\end{cor}}
\newcommand{\bd}{\begin{defn}}
\newcommand{\ed}{\end{defn}}
\DeclareMathOperator{\m}{mod}
\DeclareMathOperator{\ppn}{ppn}
\DeclareMathOperator{\ppc}{ppc}
\begin{document}

\title{Prime Power and Prime Product Distance Graphs}

\author{Joshua D. Laison}
\author{Yumi Li}
\author{Jeffrey Schreiner-McGraw}
\author{Colin Starr}

\affil{Mathematics Department \\
Willamette University \\
900 State St. \\
Salem, OR 97301}

\maketitle

\begin{abstract}
A graph $G$ is a $k$-prime product distance graph if its vertices can be labeled with distinct integers such that for any two adjacent vertices, the difference of their labels is the product of at most $k$ primes.  A graph has prime product number $\ppn(G)=k$ if it is a $k$-prime product graph but not a $(k-1)$-prime product graph.
Similarly, $G$ is a prime $k$th-power graph (respectively, strict prime $k$th-power graph) if its vertices can be labeled with distinct integers such that for any two adjacent vertices, the difference of their labels is the $j$th power of a prime, for $j \leq k$ (respectively, the $k$th power of a prime exactly).

We prove that $\ppn(K_n) = \lceil \log_2(n)\rceil - 1$, and for a nonempty $k$-chromatic graph $G$, $\ppn(G) = \lceil \log_2(k)\rceil - 1$ or $\ppn(G) = \lceil \log_2(k)\rceil$.  We determine $\ppn(G)$ for all complete bipartite, 3-partite, and 4-partite graphs.  We prove that $K_n$ is a prime $k$th-power graph if and only if $n < 7$, and
we determine conditions on cycles and outerplanar graphs $G$ for which $G$ is a strict prime $k$th-power graph.

We find connections between prime product and prime power distance graphs and the Twin Prime Conjecture, the Green-Tao Theorem, and Fermat's Last Theorem.
\end{abstract}

\noindent \textbf{Keywords:} distance graphs, prime distance graphs, difference graphs, prime product distance graphs, prime power distance graphs
\bigskip

\noindent \textbf{Mathematics Subject Classification (2010):} 05C78, 11A41

\section{Introduction}

Following Laison, Starr, and Walker \cite{Laison13}, a (\textit{\textbf{finite}}) \textit{\textbf{prime distance graph}} is a graph $G$ for which there exists a one-to-one labeling of its vertices $L:V(G) \to \mathbb{Z}$ such that for any two adjacent vertices $u$ and $v$, the integer $|L(u)-L(v)|$ is prime.  We let $L(uv)=|L(u)-L(v)|$.  We call $L$ a \textit{\textbf{prime distance labeling}} of $G$, so $G$ is a prime distance graph if and only if it has a prime distance labeling.  Note that in a prime distance labeling, the labels on the vertices of $G$ must be distinct, but the labels on the edges need not be.  Also note that by this definition, $L(uv)$ may still be prime even if $uv$ is not an edge of $G$.

Eggleton, Erd{\H{o}}s, and Skilton introduced infinite prime distance graphs and the study of their chromatic numbers in 1985 \cite{Eggleton85,Eggleton86, Eggleton88,Eggleton90,Voigt94,Yegnanarayanan02}.
In 2013, Laison, Starr, and Walker proved that trees, cycles, and bipartite graphs are prime distance graphs and that Dutch windmill graphs (also called friendship graphs) and paper mill graphs are prime distance graphs if and only if the Twin Prime Conjecture and dePolignac's Conjecture are true, respectively.  They also started a characterization of prime distance circulant graphs \cite{Laison13}.

In this paper, we extend the definition of prime distance graphs to prime product distance graphs and prime power distance graphs, for which the labels on the edges are products of at most $k$ primes and $k$th powers of primes, respectively.

\section{Prime Product Distance Graphs}

Given a graph $G$ and a one-to-one labeling of its vertices $L:V(G) \to \mathbb{Z}$, we say that $L$ is a \textit{\textbf{$k$-prime product distance labeling}} of $G$ and $G$ is a \textit{\textbf{$k$-prime product graph}} if for any two adjacent vertices $u$ and $v$, the integer $|L(u)-L(v)|$ has at most $k$ (not necessarily distinct) prime factors.  We require $|L(u)-L(v)|>1$ for all distinct vertices $u$ and $v$ of $G$.  If $G$ is a $k$-prime product graph and not a $(k-1)$-prime product graph, then we say the \textit{\textbf{prime product number}} of $G$ is $\ppn(G)=k$.  Every finite graph has finite prime product number, since labeling with consecutive even integers satisfies $|L(u)-L(v)|>1$ for all distinct vertices $u$ and $v$.  Also, $\ppn(G)=1$ if and only if $G$ is a prime distance graph, and $\ppn(G)$ is a monotone graph invariant: if $H$ is a subgraph of $G$, then $\ppn(H)\le \ppn(G)$.  Furthermore, if a graph $G$ has a $k$-prime product distance labeling then it has a $(k+1)$-prime product distance labeling, given by multiplying the labels on every vertex by a prime not yet used.  So for every graph $G$, $G$ is an $m$-prime product graph for all $m \geq \ppn(G)$.  In this section we determine bounds on $\ppn(G)$ for all graphs.

Recall the following graph theoretic terms \cite{West96}.  A graph $G$ is \textit{\textbf{$k$-partite}} or \textit{\textbf{$k$-colorable}} if $V(G)$ can be partitioned into $k$ disjoint subsets such that if two vertices of $G$ are adjacent, then they are in differerent subsets.  If every two vertices in different subsets are adjacent, $G$ is \textit{\textbf{complete $k$-partite}}.  A graph $G$ is \textit{\textbf{$k$-chromatic}} if $G$ is $k$-partite and not $(k-1)$-partite, or equivalently, if $k$ is the smallest number of colors needed to color the vertices of $G$ so that no two adjacent vertices have the same color.  Note that a complete $k$-partite graph is $k$-chromatic.

The following lemma generalizes a result of Eggleton, Erd{\H{o}}s, and Skilton \cite[Lemma~6]{Eggleton85}.

\begin{lemma} \label{colorable-lemma}
If $G$ has a $k$-prime product distance labeling, then $G$ is $2^{k+1}$-colorable.
\end{lemma}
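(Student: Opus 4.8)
The plan is to exhibit an explicit proper coloring with $2^{k+1}$ colors that is read directly off the given labeling, so that the combinatorial structure of $G$ never needs to be examined beyond the edge-difference condition. Let $L : V(G) \to \mathbb{Z}$ be a $k$-prime product distance labeling, and define a coloring $c : V(G) \to \{0, 1, \dots, 2^{k+1}-1\}$ by $c(v) = L(v) \bmod 2^{k+1}$. This uses at most $2^{k+1}$ colors, so it remains only to verify that $c$ is proper.

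The key observation is that the edge condition controls the $2$-adic valuation of each edge difference. If $u$ and $v$ are adjacent, then $n := |L(u) - L(v)|$ is a product of at most $k$ primes counted with multiplicity, and $n > 1$. Since $2$ is the only even prime, the number of factors of $2$ dividing $n$ is at most the total number of prime factors of $n$, which is at most $k$; hence $2^{k+1} \nmid n$. Equivalently $L(u) \not\equiv L(v) \pmod{2^{k+1}}$, so $c(u) \neq c(v)$. Thus adjacent vertices always receive distinct colors and $G$ is $2^{k+1}$-colorable.

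I expect no serious obstacle here: the entire argument reduces to the remark that a number with at most $k$ prime factors cannot be divisible by $2^{k+1}$, and the case $k = 1$ recovers the familiar fact that a prime distance graph is $4$-colorable (color by residue mod $4$), matching the lemma of Eggleton, Erd\H{o}s, and Skilton being generalized. The only point that warrants a sentence of care is confirming that ``product of at most $k$ primes'' is meant with multiplicity, so that repeated factors of $2$ are counted correctly; this is exactly how the labeling was defined at the start of this section.
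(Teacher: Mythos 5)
Your proof is correct and is essentially identical to the paper's: both color each vertex by $L(v) \bmod 2^{k+1}$ and observe that an edge difference divisible by $2^{k+1}$ would have at least $k+1$ prime factors counted with multiplicity, contradicting the labeling condition. No issues.
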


\begin{proof}
Suppose $G$ has a $k$-prime product distance labeling $L$.  Color the vertices of $G$ with the integers $L(v) \m 2^{k+1}$, for $v \in V(G)$.  We verify that if two vertices $u$ and $v$ have the same color, then they are not adjacent.  If $u$ and $v$ have the same color then $L(u) \equiv L(v) \m 2^{k+1}$, so $2^{k+1}$ divides $|L(u)-L(v)|$.  The vertices $u$ and $v$ cannot be adjacent in $G$, since $L(uv)$ would be a product of at least $k+1$ primes.
\end{proof}

\begin{lemma}
For positive integers $m$ and $n$, if $m<n$ then $m$ has at most $\lceil \log_2(n)\rceil - 1$ (not necessarily distinct) factors. \label{factors}
\end{lemma}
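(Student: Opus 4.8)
The plan is to let $t$ denote the number of (not necessarily distinct) prime factors of $m$, write $m = p_1 p_2 \cdots p_t$ with each $p_i$ prime, and exploit the single inequality that every prime is at least $2$. This immediately gives the key estimate $m \geq 2^t$, which converts a statement about prime factorizations into a clean statement about powers of $2$. The hypothesis $m < n$ then does all the remaining work.

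First I would establish $2^t \le m$. Since each $p_i \ge 2$, the product $m = p_1 \cdots p_t \ge 2^t$; the degenerate case $m = 1$ (where $t = 0$) satisfies this trivially. Combining with $m < n$ yields $2^t \le m < n$, hence the strict inequality $2^t < n$, and therefore $t < \log_2(n)$.

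The last step is to upgrade the real-valued bound $t < \log_2(n)$ to the integer bound $t \le \lceil \log_2(n)\rceil - 1$, and this is the only place where any care is needed. I would split into two cases according to whether $n$ is a power of $2$. If $n$ is not a power of $2$, then $\log_2(n)$ is irrational (indeed non-integral), so $\lceil \log_2(n)\rceil = \lfloor \log_2(n)\rfloor + 1$; since $t$ is an integer strictly below $\log_2(n)$, we get $t \le \lfloor \log_2(n)\rfloor = \lceil \log_2(n)\rceil - 1$. If instead $n = 2^s$ for some integer $s$, then $\log_2(n) = s = \lceil \log_2(n)\rceil$, and $t < s$ forces $t \le s - 1 = \lceil \log_2(n)\rceil - 1$. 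In both cases the desired bound holds.

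The main (and really the only) obstacle is this final conversion: one must not be tempted to write $t \le \lceil \log_2(n)\rceil$ and stop, since the strictness $2^t < n$ is exactly what buys the extra $-1$, and the power-of-$2$ case is the one where overlooking the strict inequality would break the argument. Everything else is the elementary observation that a number with many prime factors is large.
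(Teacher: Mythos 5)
Your proof is correct and rests on the same key fact as the paper's: a product of $t$ primes is at least $2^t$. The paper just phrases it contrapositively (if $m$ has at least $\lceil \log_2(n)\rceil$ prime factors then $m \geq 2^{\lceil \log_2(n)\rceil} \geq n$), which sidesteps the power-of-$2$ case analysis you perform at the end, but the substance is identical.
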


\begin{proof}
By contrapositive, if $m$ has more than  $\lceil \log_2(n)\rceil - 1$ factors, then $m \geq 2^{\lceil \log_2(n) \rceil} \geq n$.
\end{proof}

\begin{theorem}
For $n \geq 3$, $\ppn(K_n) = \lceil \log_2(n)\rceil - 1$.
\end{theorem}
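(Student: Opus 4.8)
The plan is to prove the two inequalities $\ppn(K_n) \ge \lceil \log_2(n)\rceil - 1$ and $\ppn(K_n) \le \lceil \log_2(n)\rceil - 1$ separately. Throughout I would write $k = \lceil \log_2(n)\rceil - 1$ and record the equivalent statement $2^k < n \le 2^{k+1}$, which is the fact that drives both halves of the argument.

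For the lower bound I would argue by contradiction using Lemma~\ref{colorable-lemma}. If $K_n$ had a $(k-1)$-prime product distance labeling, then $K_n$ would be $2^{(k-1)+1} = 2^k$-colorable. But $K_n$ is $n$-chromatic, so this forces $n \le 2^k$, contradicting $n > 2^k$. Hence $K_n$ admits no $(k-1)$-prime product labeling, giving $\ppn(K_n) \ge k$. This direction is immediate from the lemma; the real content is the matching construction.

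For the upper bound I would exhibit an explicit $k$-prime product distance labeling. The naive labeling $0,1,\dots,n-1$ fails only because consecutive labels differ by $1$, and uniformly rescaling to force every gap above $1$ costs an extra prime factor and overshoots to $k+1$. To escape this I would split the labels by parity: take the $2^k$ even labels $0, 2, 4, \dots, 2^{k+1}-2$, together with $r := n - 2^k$ odd labels $2^{k+1}+1,\, 2^{k+1}+3,\, \dots,\, 2^{k+1}+2r-1$, where $1 \le r \le 2^k$. These $n$ integers are distinct and no two differ by $1$. Any two even labels, and any two odd labels, then differ by a positive even number less than $2^{k+1}$; applying Lemma~\ref{factors} with $2^{k+1}$ in place of $n$ (so that $\lceil \log_2(2^{k+1})\rceil - 1 = k$) shows each such difference has at most $k$ factors. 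The only differences exceeding $2^{k+1}$ are those between an even and an odd label, and these are odd and lie in the interval $[3, 2^{k+2})$. Here I would invoke the key observation that an odd number is a product of primes each at least $3$, so an odd number with $k+1$ factors is at least $3^{k+1} > 2^{k+2}$; hence every odd difference in our range also has at most $k$ factors. This yields a valid $k$-prime product labeling, so $\ppn(K_n) \le k$, and combining with the lower bound finishes the proof.

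The step I expect to be the main obstacle is precisely this construction. The constraint that all pairwise differences exceed $1$ rules out consecutive integers and makes any uniform scaling lose a factor, so at least one difference is unavoidably as large as $2^{k+1}$ and cannot be handled by the crude ``all differences below $2^{k+1}$'' estimate. The device that rescues the argument is routing those forced-large differences through \emph{odd} values, where the smallest integer with $k+1$ factors jumps from $2^{k+1}$ up to $3^{k+1}$, creating exactly enough room; verifying $3^{k+1} > 2^{k+2}$ for all $k \ge 1$ (which holds since $n \ge 3$ guarantees $k \ge 1$) is the one inequality I would check carefully.
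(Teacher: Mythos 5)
Your proposal is correct and follows essentially the same strategy as the paper: the lower bound via Lemma~\ref{colorable-lemma} is identical, and your upper-bound construction is the same parity-splitting idea (the paper uses labels $2i$ for $i\le n/2$ and $2i+1$ for $i>n/2$), with same-parity differences controlled by Lemma~\ref{factors} and the larger cross-parity differences forced to be odd so that the threshold for having $k+1$ prime factors jumps from $2^{k+1}$ to $3^{k+1}$.
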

\begin{proof}
Let $k=\lceil \log_2(n)\rceil - 1$.  First we show that $K_n$ has a $k$-prime product distance labeling.  If the vertices of $K_n$ are $v_1$, $\ldots$, $v_n$, let $L(v_i)=2i$ if $i \leq n/2$ and $L(v_i)=2i+1$ if $i > n/2$.  Note that no two vertices are labeled with consecutive integers, so $|L(v_i)-L(v_j)|>1$ for all distinct vertices $v_i$ and $v_j$.  We verify that $|L(v_i)-L(v_j)|$ is the product of at most $k$ primes.  If $i$ and $j$ are both at most $n/2$, then $|L(v_i)-L(v_j)|=|2i-2j|<n$ has at most $k$ factors.
If  $i$ and $j$ are both greater than $n/2$, then $|L(v_i)-L(v_j)|=|(2i+1)-(2j+1)|<n$ has at most $k$ factors.
Finally, suppose $i > n/2$ and $j \leq n/2$.  Then $|L(v_i)-L(v_j)|=|2i+1-2j|$ is odd and less than $2n$.
So $|L(v_i)-L(v_j)|$ has at most $\lceil \log_3(2n)\rceil - 1 \leq k$ factors.


Conversely, by Lemma~\ref{colorable-lemma}, if $K_n$ has a $(k-1)$-prime product distance labeling, then $K_n$ is $2^k=(2^{\lceil \log_2(n)\rceil - 1})$-colorable, but $2^{\lceil \log_2(n)\rceil - 1}<n$, and $K_n$ is $n$-chromatic.

\end{proof}

Recall that the Green-Tao Theorem says that for any positive integer $j$, there exists a prime arithmetic progression of length $j$ \cite{green08}.  Theorem 1 of \cite{Laison13} uses the Green-Tao Theorem to construct a prime distance labeling of an arbitrary bipartite graph.  The following theorem generalizes this construction to construct a prime product labeling of an  arbitrary $k$-chromatic graph.

\begin{theorem} \label{k-partite}
For a nonempty $k$-chromatic graph $G$,  $\ppn(G) = \lceil \log_2(k)\rceil - 1$ or $\ppn(G) = \lceil \log_2(k)\rceil$.
\end{theorem}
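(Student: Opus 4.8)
Write $t=\lceil \log_2(k)\rceil$. The plan is to establish the two bounds $\ppn(G)\ge t-1$ and $\ppn(G)\le t$ separately; since these are consecutive integers, the theorem follows at once.

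For the lower bound I would argue exactly as in the converse direction of the preceding $K_n$ theorem. Suppose $\ppn(G)=p$. By Lemma~\ref{colorable-lemma}, $G$ is $2^{p+1}$-colorable, and since $G$ is $k$-chromatic this forces $k\le 2^{p+1}$, hence $p\ge \log_2(k)-1$. Because $p$ is an integer, $p\ge \lceil \log_2(k)-1\rceil=\lceil \log_2(k)\rceil-1=t-1$. This step is short and entirely self-contained.

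For the upper bound I would construct a $t$-prime product distance labeling, generalizing the bipartite Green--Tao construction of \cite{Laison13}. Since $\ppn$ is monotone under subgraphs and every $k$-chromatic graph is a subgraph of a complete $k$-partite graph, it suffices to label the complete $k$-partite graph with parts $V_1,\dots,V_k$ of some common size $n$. I would first \emph{code} the colors: assign to class $c$ a small integer $\alpha(c)$ using the labeling for $K_k$ from the preceding theorem, so that the $\alpha(c)$ are distinct and every nonzero difference $s=\alpha(c)-\alpha(c')$ is a product of at most $t-1$ primes (this is precisely where the $K_k$ bound $\lceil\log_2(k)\rceil-1$ and Lemma~\ref{factors} enter). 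I would then \emph{spread} the $n$ vertices of each class so that a cross-class difference has the form $s\cdot q$, where $s$ carries at most $t-1$ prime factors and $q$ is a single prime supplied by a long prime arithmetic progression from the Green--Tao Theorem \cite{green08}; such a difference is a product of at most $(t-1)+1=t$ primes. Within-class differences are irrelevant, since those vertices are pairwise nonadjacent, so only the cross-class differences must be controlled.

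The main obstacle is \emph{simultaneity}: a single arithmetic progression of primes must realize the required factorization for \emph{every} color pair $(c,c')$ and \emph{every} pair of within-class indices at once, i.e.\ for all of the $O(k^2)$ families of differences indexed by the $K_k$-code differences $s$. I would attempt to resolve this by scaling one Green--Tao progression by the small factors $s$ and choosing the progression long enough, with its starting term adjusted so that each scaled-and-shifted subfamily lands in the primes; the delicate point is that the same progression must serve all pairs and both signs simultaneously, and verifying that a single choice suffices is where the real work lies. The remaining bookkeeping---checking that all labels are distinct and that $|L(u)-L(v)|>1$ for every pair, which holds because the progression's terms are large and each prime cofactor exceeds $1$---is routine. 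Combining the two bounds yields $\ppn(G)\in\{t-1,\,t\}$, as claimed.
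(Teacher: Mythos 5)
Your lower bound is correct and is exactly the paper's argument: Lemma~\ref{colorable-lemma} gives $k \le 2^{p+1}$ for $p=\ppn(G)$, hence $p \ge \lceil\log_2(k)\rceil - 1$. The problem is the upper bound, and the gap is precisely the one you flag yourself as ``where the real work lies'': you never actually produce a labeling, only a plan whose crux --- making a single prime arithmetic progression realize the required factorization simultaneously for all $O(k^2)$ color pairs, all within-class indices, and both signs --- is left unresolved. ``Scaling one Green--Tao progression by the small factors $s$ and adjusting its starting term so that each scaled-and-shifted subfamily lands in the primes'' is not a step you can justify: nothing guarantees that several independent affine images of one progression can be made to consist of primes. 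So as written the proposal does not establish $\ppn(G)\le\lceil\log_2(k)\rceil$.

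The paper dissolves the simultaneity problem rather than attacking it head-on, by building the needed divisibility into the labels. Take a prime arithmetic progression $p, p+d, \ldots, p+jk!d$ with $p>2$, where $j$ is the size of the largest color class, and label the $r$th vertex of the $x$th class by $xp+rk!d$. For adjacent vertices in classes $x\ne y$ with indices $r,s$, the difference is $(x-y)p+(r-s)k!d$; since $0<|x-y|<k$, the integer $x-y$ divides $k!$, so the difference factors as $|x-y|\,(p+ad)$ with $a=(r-s)k!/(x-y)$ an integer of size at most $jk!$, i.e.\ $p+ad$ is a term of the single chosen progression and hence one prime, while $|x-y|<k$ contributes at most $\lceil\log_2(k)\rceil-1$ further prime factors by Lemma~\ref{factors}. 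In other words, one progression suffices because every cross-class difference is automatically a small integer times one term of that progression --- the factor $k!$ in the labels is the missing idea in your plan. Your preliminary step of re-coding the colors via the $K_k$ labeling is also an unnecessary detour: the raw index difference $|x-y|<k$ already has few enough prime factors, and distinctness of labels together with $|L(u)-L(v)|>1$ follows from $\gcd(p,d)=1$ as in the paper.
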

\begin{proof}
By Lemma~\ref{colorable-lemma}, if $G$ has a $(\lceil \log_2(k)\rceil - 2)$-prime product distance labeling, then $G$ is $(2^{\lceil \log_2(k)\rceil - 2+1})$-colorable.  Since  $2^{\lceil \log_2(k)\rceil - 2+1}<k$, this contradicts the fact that $G$ is $k$-chromatic, so $\ppn(G) \geq \lceil \log_2(k)\rceil - 1$.

We show that $\ppn(G) \leq \lceil \log_2(k)\rceil$ by producing an $m$-prime product distance labeling of $G$ for a positive integer $m \leq \lceil \log_2(k)\rceil$.   Since $G$ is $k$-chromatic, $G$ can be partitioned into $k$ independent partite sets.  Let $p$, $p+d$, $\ldots$, $p+jk!d$ be a prime arithmetic progression for $p>2$ and where $j$ is the size of the largest partite set of $G$.  We label the $i_x$ vertices in the $x$th partite set with the labels $xp+k!d$, $xp+2k!d$, $\ldots$, $xp+i_xk!d$, and claim that this labeling is an $m$-prime product distance labeling for a positive integer $m \leq \lceil \log_2(k)\rceil$.

We first verify that these vertex labels are all distinct.  This is true for two vertices in the same partite set by construction.  Suppose that $xp+rk!d=yp+sk!d$ for some $x$, $y$, $r$, and $s$.  Then $(x-y)p=(s-r)k!d$.  Since $|x-y|<k,$ $(x-y)|k!,$ so $d|p$.  Since both $p$ and $p+d$ are prime, $\gcd(p,d)=1$, which implies that $d=1.$  However, since $p>2$ this implies $p+d=p+1$ is not prime, which is a contradiction.
Thus the vertex labels are all distinct.

Let $u$ and $v$ be two adjacent vertices in $G$ in the $x$th and $y$th partite sets, respectively.  We verify that $|L(u)-L(v)|$ has at most $\lceil \log_2(k)\rceil$ prime factors.  Suppose $L(u)=xp+rk!d$ and $L(v)=yp+s k!d$, and without loss of generality assume that $r>s.$  Then $|L(u)-L(v)|=|(xp+rk!d)-(yp+sk!d)|=|(x-y)|p+k!d(r-s)$.  Again since $|x-y|<k$, $(x-y) | k!$, so we can factor $|L(u)-L(v)|=|(x-y)|(p+ad)$ for some positive integer $a \leq jk!$.  By construction, $p+ad$ is prime.  Since $|x-y|<k$, $|x-y|$ has at most $\lceil \log_2(k)\rceil - 1$ factors by Lemma~\ref{factors}.  Thus $|L(u)-L(v)|$ has at most $\lceil \log_2(k)\rceil$ prime factors.
\end{proof}

By Theorem~\ref{k-partite}, for every $k$-chromatic graph $G$, there are two possibilities for $\ppn(G)$.  Note that both bounds are achieved: $C_3$ meets the lower bound and $C_4$ meets the upper bound.  We ask which $k$-chromatic graphs $G$ have $\ppn(G) = \lceil \log_2(k)\rceil - 1$ and which have $\ppn(G) = \lceil \log_2(k)\rceil$.  Note that if $\chi(G)=2$, then $G$ is a 1-prime product distance graph by Theorem 1 of \cite{Laison13}, so $\ppn(G)=\lceil \log_2(2)\rceil$.  If $G$ is 3-chromatic or 4-chromatic, then Theorem~\ref{k-partite} implies that $\ppn(G)=1$ or $\ppn(G)=2$.  Which 3-chromatic graphs and 4-chromatic graphs have $\ppn(G)=1$?  We answer this question here for complete 3-partite and complete 4-partite graphs.

In Lemma~\ref{K122-lemma} and Theorems~\ref{3-partite} and \ref{4-partite}, we use the following theorem of Laison, Starr, and Walker.  Recall that a graph $G$ is \textit{\textbf{2-odd}} if $G$ admits a labeling $L$ with the property that for distinct vertices $u$ and $v,$ $L(uv)$ is either odd or exactly 2.  Every prime distance graph is 2-odd.  For a 2-odd graph $G$, we color an edge $uv$ of $G$ red if $L(uv)=2$ and blue otherwise.

\begin{theorem}[{\cite[Theorem~13]{Laison13}}] \label{2-odd-characterization}
A graph $G$ is 2-odd if and only if it admits a red-blue edge-coloring satisfying the following two conditions:
\begin{enumerate}
    \item No vertex of $G$ has red-degree greater than 2. \label{red-degree}
    \item Every cycle in $G$ contains a positive even number of blue edges. \label{blue-cycle}
\end{enumerate}
\end{theorem}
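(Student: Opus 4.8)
The plan is to prove both directions of the biconditional. For the forward direction I assume $G$ is 2-odd via a labeling $L$ and adopt the coloring in which an edge $uv$ is red when $L(uv)=2$ and blue when $L(uv)$ is odd. Condition (\ref{red-degree}) is immediate: a red edge at $v$ can only join $v$ to a vertex labeled $L(v)-2$ or $L(v)+2$, and since $L$ is injective there are at most two such neighbors. For condition (\ref{blue-cycle}), I orient a cycle $v_1 v_2 \cdots v_m v_1$ cyclically; the signed differences telescope, giving $\sum_{i}\bigl(L(v_{i+1})-L(v_i)\bigr)=0$ with indices taken mod $m$. Reducing modulo $2$, each red edge contributes an even difference and each blue edge an odd difference, so the number of blue edges in the cycle is even. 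To see it is positive I rule out an all-red cycle directly: if such a cycle existed, the vertex $v_k$ carrying its largest label would have both of its (distinct) cycle-neighbors labeled $L(v_k)-2$, contradicting injectivity of $L$. Hence every cycle has a positive even number of blue edges.

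For the converse, suppose the red-blue coloring satisfies (\ref{red-degree}) and (\ref{blue-cycle}); I construct a 2-odd labeling in three steps. First, by (\ref{red-degree}) the red subgraph $R$ has maximum degree at most $2$, so it is a disjoint union of paths and cycles, and by the positivity in (\ref{blue-cycle}) no cycle of $G$ is entirely red, so $R$ is a disjoint union of paths (isolated vertices counting as trivial paths). Second, I define a parity function $\pi\colon V(G)\to\{0,1\}$ by rooting each connected component of $G$ and letting $\pi(v)$ be the parity of the number of blue edges on a path from the root to $v$; this is well-defined precisely because every cycle has an even number of blue edges, and by construction $\pi(u)=\pi(v)$ across red edges while $\pi(u)\neq\pi(v)$ across blue edges. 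In particular all vertices of a single red path share one parity value.

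Third, I realize the labels. Writing each red path as $P_j = w_0^j w_1^j \cdots w_{t_j}^j$ with common parity $\pi_j$, I set $L(w_i^j)=b_j+2i$, where the bases $b_j$ are chosen with $b_j\equiv \pi_j \pmod{2}$ and spaced far enough apart that the finitely many intervals $\{b_j, b_j+2, \ldots, b_j+2t_j\}$ are pairwise disjoint. Then consecutive vertices of a red path differ by exactly $2$, so red edges are correctly realized; all vertex labels are distinct; and since $L(v)\equiv\pi(v)\pmod{2}$ for every $v$, each blue edge $uv$ has endpoints of opposite parity and hence $|L(u)-L(v)|$ odd. Thus $L$ is a 2-odd labeling.

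The main obstacle is the converse, and the two places where condition (\ref{blue-cycle}) does the real work are exactly the delicate points: its positivity guarantees that $R$ is acyclic, so that the red paths become rigid arithmetic progressions of step $2$, and its evenness guarantees that the potential $\pi$ is globally consistent. Once $\pi$ exists, the construction is forced and easy: blue edges require no further attention because opposite parities already force odd differences, so only the routine task of translating the rigid red paths into disjoint integer ranges remains.
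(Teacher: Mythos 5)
This theorem is quoted from \cite[Theorem~13]{Laison13}; the present paper states it without proof, so there is no in-paper argument to compare against. Your proof is correct and is the standard argument for this characterization: the forward direction via the telescoping parity sum plus the maximum-label trick to forbid all-red cycles, and the converse via observing that the red subgraph is a disjoint union of paths, building the blue-parity potential $\pi$ (well-defined exactly because of condition (\ref{blue-cycle})), and realizing each red path as an arithmetic progression of step $2$ on a base of the right parity. The only caveat is definitional, not logical: the paper's phrasing ``for distinct vertices $u$ and $v$'' should be read as ``for adjacent vertices'' (as in the source paper, and as needed for the claim that every prime distance graph is 2-odd), and your proof correctly uses that reading.
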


We now consider complete 3-partite graphs $K_{a,b,c}$ for positive integers $a$, $b$, and $c$.  In what follows, assume $a \leq b \leq c$.

\begin{lemma}\label{K122-lemma}
Let $G=K_{1,2,2}$ be the complete 3-partite graph with partite sets $\{x\}$, $\{y_1, y_2\}$, and $\{z_1, z_2\}$, and suppose $L$ is a prime distance labeling of $G$.  If $L(x)=0$, then the remaining vertices have labels $2$, $-2$, $5$, and $-5$, with $L(y_1)=-L(y_2)$ and $L(z_1)=-L(z_2)$, and conversely.
\end{lemma}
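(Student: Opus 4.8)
The plan is to work directly with the adjacency constraints. Since $x$ is adjacent to all of $y_1, y_2, z_1, z_2$ and $L(x)=0$, each of the four remaining labels must be $\pm p$ for a prime $p$; write $a=L(y_1)$, $b=L(y_2)$, $c=L(z_1)$, $d=L(z_2)$. The only non-edges are $y_1y_2$ and $z_1z_2$, so the four cross differences $|a-c|$, $|a-d|$, $|b-c|$, $|b-d|$ must all be prime, while $|a-b|$ and $|c-d|$ are unconstrained. I would organize the argument around parity, using that the only even primes are $\pm 2$: hence at most two of $a,b,c,d$ can be even, and if two are even they must be exactly $2$ and $-2$. Throughout I would exploit the symmetries of $G$ (swapping $y_1\leftrightarrow y_2$, swapping $z_1\leftrightarrow z_2$, swapping the two size-$2$ parts, and globally negating all labels, which fixes $L(x)=0$) to reduce the number of cases.

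First I would record a small number-theoretic fact: if $n-2$, $n$, $n+2$ are integers all of whose absolute values are prime, then $n=\pm 5$, so those absolute values are $\{3,5,7\}$. This holds because the three numbers must all be odd (three distinct even integers cannot all lie in $\{2,-2\}$), and modulo $3$ they occupy all residues, so one has absolute value $3$; checking the three placements of $\pm 3$ leaves only $n=\pm 5$. The role of this fact is that whenever two odd labels are forced to differ by an even prime, that difference equals $2$, and a third collinear odd label then pins everything down.

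Next I would split on the number of even labels. If none is even, every cross difference is even, hence equal to $2$, forcing $\{c,d\}=\{a-2,a+2\}=\{b-2,b+2\}$ and thus $a=b$, a contradiction. If exactly one label is even, I may assume by symmetry it is $a$; then $b,c,d$ are odd, the differences $|b-c|=|b-d|=2$ give $\{c,d\}=\{b-2,b+2\}$, the lemma forces $b=\pm 5$, and a direct check of $|a-c|$, $|a-d|$ with $a=\pm 2$ always yields a non-prime (a $1$ or a $9$), another contradiction. The remaining case has exactly two even labels, $2$ and $-2$. If they sit in different parts, the corresponding vertices are adjacent with difference $|2-(-2)|=4$, which is not prime — this is the clean contradiction ruling out a mixed placement. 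Hence both even labels lie in one part, say $\{a,b\}=\{2,-2\}$; then for each of $c,d$ the primality of $|c-2|$, $|c|$, $|c+2|$ invokes the lemma to give $c,d\in\{5,-5\}$, and distinctness yields $\{c,d\}=\{5,-5\}$. This produces exactly the labels $2,-2,5,-5$ with $L(y_1)=-L(y_2)$ and $L(z_1)=-L(z_2)$. The converse is a one-line verification that the resulting cross differences lie in $\{3,7\}$.

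I expect the main obstacle to be bookkeeping rather than a deep idea: choosing the symmetry reductions so the one-even and zero-even cases close quickly, and stating the mixed-even contradiction (difference $4$) and the three-term arithmetic-progression fact cleanly enough that the case analysis does not degenerate into a long numeric grind.
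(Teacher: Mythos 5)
Your forward direction is correct and follows essentially the same case structure as the paper: reduce to the parity of the four outer labels, use the fact that an even label adjacent to $0$ must be $\pm 2$, and pin down the odd labels via the observation that $p-2,p,p+2$ all having prime absolute value forces $\{3,5,7\}$. The only real difference is cosmetic: where the paper invokes its red--blue $2$-odd characterization (Theorem~\ref{2-odd-characterization}) to kill the ``all odd'' case (a red $4$-cycle $y_1z_1y_2z_2$) and the ``one even label in each part'' case (a red triangle $xy_1z_1$), you replace these with direct arithmetic (a forced label collision $a=b$, and the non-prime difference $|2-(-2)|=4$). That makes your argument slightly more self-contained, at the cost of re-deriving in miniature what the cited theorem packages; both are fine.

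There is, however, a genuine gap at the end: you have misread what ``conversely'' means here. The converse of the stated implication is not ``the labeling $0,2,-2,5,-5$ has prime cross differences'' (which is the existence statement needed for Theorem~\ref{3-partite}(2)); it is: \emph{given} a prime distance labeling of $K_{1,2,2}$ in which $y_1,y_2,z_1,z_2$ carry the labels $2,-2,5,-5$ as described, the center must satisfy $L(x)=0$. This direction requires its own (short) argument --- if $L(x)$ is odd then $|L(x)-5|$ and $|L(x)+5|$ are both even primes, hence both $2$, which is impossible; if $L(x)$ is even then $|L(x)-2|=|L(x)+2|=2$, forcing $L(x)=0$ --- and it is genuinely used later in the paper (in the proof that $\ppn(K_{2,2,2})=2$, where one concludes $L(x_2)=0=L(x_1)$, a contradiction). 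Your ``one-line verification that the cross differences lie in $\{3,7\}$'' proves a different, weaker statement and leaves the needed direction unproved.
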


\begin{proof}
Suppose $L(x)=0.$  Note that $L(y_1)$, $L(y_2)$, $L(z_1)$, and $L(z_2)$ are all prime or the negative of a prime since all are adjacent to $x$.  If $L(y_1)$, $L(y_2)$, $L(z_1)$, and $L(z_2)$ are all odd, then in a red-blue coloring of $G$, $y_1z_1y_2z_2$ is a red cycle, contradicting Theorem~\ref{2-odd-characterization}.  Without loss of generality, suppose $L(y_1)$ is even.  If $L(y_2)$, $L(z_1)$, and $L(z_2)$ are all odd, then $L(z_1y_2)=L(y_2z_2)=2$, so $L(y_2)$, $L(z_1)$, and $L(z_2)$ are 3, 5, and 7, or -3, -5, and -7.  Then $L(y_1z_1)$ or $L(y_1z_2)$ is 1 or 9, which is a contradiction.

Hence one of the numbers $L(y_2)$, $L(z_1)$, and $L(z_2)$ is even, but if one of $L(z_1)$ and $L(z_2)$ is even, say $L(z_1)$, then $xy_1z_1$ forms a red cycle, again contradicting Theorem~\ref{2-odd-characterization}.  Therefore $L(y_1)$ and $L(y_2)$ are both even, and since $y_1$ and $y_2$ are both adjacent to $x$, their labels must be $2$ and $-2$.  Without loss of generality, $L(y_1)=2$ and $L(y_2)=-2.$

Now the numbers $L(y_2z_1)$, $L(xz_1)$, and $L(y_1z_1)$ are all prime and of the form $p-2$, $p$, and $p+2$, so they must be 3, 5, and 7.  Thus $L(z_1)=L(xz_1)=5$ or $L(z_1)=-L(xz_1)=-5$, and the same is true of $L(z_2)$.

Conversely, suppose $L(y_1)=2$, $L(y_2)=-2$, $L(z_1)=5$, and $L(z_2)=-5$.  If $L(x)$ is odd, then $L(xz_1)=|L(x)-5|$ and $L(xz_2)=|L(x)+5|$ are both even and prime, so they are both 2, which is impossible.  If $L(x)$ is even, then similarly $L(xy_1)=|L(x)-2|$ and $L(xy_2)=|L(x)+2|$ are both 2, so $L(x)=0$.
\end{proof}

\begin{theorem}\label{3-partite}
For positive integers $a \leq b \leq c$,

\begin{enumerate}
    \item $\ppn(K_{1,1,c})=1$ if and only if there exist $c$ pairs of twin primes.
    \item $\ppn(K_{1,2,2})=1$.
    \item For all other values of $a$, $b$, and $c$, $\ppn(K_{a,b,c})=2$.
\end{enumerate}
\end{theorem}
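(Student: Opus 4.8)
The plan is to lean on Theorem~\ref{k-partite}: every complete $3$-partite graph is $3$-chromatic, so $\ppn(K_{a,b,c})\in\{1,2\}$, and $\ppn=1$ exactly when the graph is a prime distance graph. Thus the whole theorem reduces to deciding, for each triple $a\le b\le c$, whether $K_{a,b,c}$ admits a prime distance labeling: part (2) is a single explicit labeling, part (3) is a family of non-existence results, and part (1) is an existence/non-existence dichotomy controlled by twin primes.

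Part (2) falls straight out of Lemma~\ref{K122-lemma}: the labeling $L(x)=0$, $\{L(y_1),L(y_2)\}=\{2,-2\}$, $\{L(z_1),L(z_2)\}=\{5,-5\}$ makes every cross-class difference lie in $\{2,3,5,7\}$, so $\ppn(K_{1,2,2})=1$. For part (3) I would first use the monotonicity of $\ppn$ to reduce to two minimal obstructions. Any triple $a\le b\le c$ outside cases (1) and (2) satisfies either $a\ge2$, whence $K_{a,b,c}\supseteq K_{2,2,2}$, or $a=1$ with $b\ge2,\ c\ge3$, whence $K_{a,b,c}\supseteq K_{1,2,3}$; so it suffices to show neither $K_{2,2,2}$ nor $K_{1,2,3}$ is a prime distance graph. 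Both arguments run through Lemma~\ref{K122-lemma} after translating some vertex to $0$. For $K_{2,2,2}$ with parts $A=\{a_1,a_2\}$, $B=\{b_1,b_2\}$, $C=\{c_1,c_2\}$, setting $L(a_1)=0$ forces $\{L(b_1),L(b_2)\}=\{2,-2\}$ and $\{L(c_1),L(c_2)\}=\{5,-5\}$ via the $K_{1,2,2}$ on $\{a_1\}\cup B\cup C$; then the converse half of the lemma, applied to the $K_{1,2,2}$ on $\{a_2\}\cup B\cup C$, forces $L(a_2)=0=L(a_1)$, a contradiction. For $K_{1,2,3}$, setting $L(x)=0$ and applying the lemma to $\{x\}\cup\{y_1,y_2\}\cup\{z_1,z_2\}$ fixes $\{y_1,y_2\}$ and $\{z_1,z_2\}$ as $\{2,-2\}$ and $\{5,-5\}$ in some order; applying it again to $\{x\}\cup\{y_1,y_2\}\cup\{z_1,z_3\}$ then forces $L(z_3)=L(z_2)$, again impossible.

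Part (1) carries the real content, and the main obstacle is a forcing argument. Writing the classes as $\{x_1\},\{x_2\},\{y_1,\dots,y_c\}$ and normalizing $L(x_1)=0$, adjacency gives $L(x_2)=\pm p$ for a prime $p$; reflect so that $p>0$. In each triangle $x_1x_2y_i$ one difference must equal $2$, since two odd primes cannot sum to an odd prime. If the shared edge is not this difference-$2$ edge, then the difference-$2$ edge is incident to $x_1$ or $x_2$; but Theorem~\ref{2-odd-characterization} caps the red-degree of each vertex at $2$, covering at most four of the $y_i$, so for $c\ge5$ we are forced to $p=2$, i.e. $L(x_1)=0,\ L(x_2)=2$. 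The admissible labels for a $y_i$ are then exactly the integers $t\notin\{0,2\}$ with $|t|$ and $|t-2|$ both prime.

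The concluding step is arithmetic bookkeeping: such a $t$ is either $t=q+2$ (when $t>2$) or $t=-q$ (when $t<0$) for a twin prime pair $(q,q+2)$, and conversely each twin prime pair $(q,q+2)$ supplies precisely the two admissible labels $q+2$ and $-q$. These $2m$ labels coming from $m$ pairs are pairwise distinct and distinct from $0,2$, so $K_{1,1,c}$ is a prime distance graph precisely when enough twin prime pairs exist to label all $c$ vertices $y_i$; this is the heart of the equivalence and ties $\ppn(K_{1,1,c})=1$ to the Twin Prime Conjecture, with the small cases $c\le4$ holding unconditionally because many small twin primes exist. I would, however, recheck the exact count before finalizing: because each pair yields both a positive and a negative admissible label, my analysis points to the sharp threshold being $\lceil c/2\rceil$ twin prime pairs rather than $c$, and I would reconcile this with the stated bound (for instance by checking whether negative labels are meant to be excluded).
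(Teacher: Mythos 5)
Your proposal follows the paper's own route almost exactly. Parts (2) and (3) are the paper's argument in outline: the explicit labeling $0,\pm2,\pm5$ for $K_{1,2,2}$, and the reduction of every remaining case to the two minimal obstructions $K_{1,2,3}$ and $K_{2,2,2}$, each eliminated by applying Lemma~\ref{K122-lemma} twice (forcing a duplicate label on $z_3$, respectively forcing $L(a_2)=L(a_1)=0$), with Theorem~\ref{k-partite} supplying the upper bound $\ppn\le 2$. Your part (1) also has the same skeleton as the paper's: normalize $L(x)=0$, use the parity/red-edge argument from Theorem~\ref{2-odd-characterization} to dispose of the case $L(y)$ odd when $c\ge 5$, and then read off twin prime pairs from the admissible labels on the remaining vertices.

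The one substantive divergence is the count at the end of part (1), and your instinct to distrust the stated bound is well founded rather than something to be reconciled away. The paper's converse argument says ``$L(z_i)$ and $L(z_i)-2$ must both be prime for each $i$, so there exist $c$ pairs of twin primes,'' which tacitly assumes every $L(z_i)$ is positive. As you observe, $L(z_i)$ may equally well be the negative of a prime, in which case the relevant twin pair is $(|L(z_i)|,|L(z_i)|+2)$, and the two distinct admissible labels $q+2$ and $-q$ come from the \emph{same} pair $(q,q+2)$. So $m$ twin prime pairs yield $2m$ pairwise distinct admissible labels, and the sharp threshold for $c\ge5$ is $m\ge\lceil c/2\rceil$, not $m\ge c$; concretely, if there were exactly $m$ twin prime pairs in total, $K_{1,1,2m}$ would still be a prime distance graph even though $2m$ pairs do not exist, contradicting the ``only if'' direction of (1) as literally stated. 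Thus your argument, as written, proves the corrected statement with $\lceil c/2\rceil$ in place of $c$ rather than the statement above, and the gap it exposes lies in the paper's own converse, not in your bookkeeping. The subsequent corollary (that $\ppn(K_{1,1,c})=1$ for all $c$ is equivalent to the Twin Prime Conjecture) survives under either formulation.
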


\begin{proof}
(1.) Let $G$ be the complete 3-partite graph with partite sets $\{x\}$, $\{y\}$, and $\{z_1, \ldots, z_c\}$. Set $L(x)=0$, and note that $G$ consists of $c$ triangles, all with the edge $xy$.

Assume first that there exist $c$ twin prime pairs ($p_1$, $p_1+2$), $\ldots$, ($p_c$, $p_c+2$). Set $L(y)=2$ and $L(z_i)=p_i+2$.
This gives a prime distance labeling of $G$, so $\ppn(G)=1$.

Now assume that $\ppn(G)=1$.  Without loss of generality, we again set $L(x)=0$, and let $q=L(y)$.  If $q$ is odd, then in a red-blue coloring of $G$ either $xz_i$ or $yz_i$ must be red for each $i$, meaning that the corresponding edge label is 2. Since vertex labels are distinct, this can happen for at most four such edges, with corresponding vertex labels $L(z_i)=2$ or $L(z_i)=-2$ or $L(z_i)=q+2$ or $L(z_i)=q-2$. So $c \leq 4$ in this case, and we know there are $c$ pairs of twin primes.

If $q$ is even, we may take $q=2$.  Since $\ppn(G)=1$, $L(z_i)$ and $L(z_i)-2$ must both be prime for each $i$, so there exist $c$ pairs of twin primes.





\bigskip

(2.) A prime distance labeling of $K_{1,2,2}$ is given in Figure~\ref{K122}.  Note that this labeling is essentially unique by Lemma \ref{K122-lemma}.
\bigskip

(3.) We prove that $\ppn(K_{1,2,3})=2$ and $\ppn(K_{2,2,2})=2$.  Since every other complete 3-partite graph has one of these graphs as a subgraph, this would imply $\ppn(K_{a,b,c})=2$ for these graphs by Theorem~\ref{k-partite}.

First let $G=K_{1,2,3}$, and suppose $G$ has partite sets $\{x\}$, $\{y_1, y_2\}$, and $\{z_1,z_2,z_3\}$.  Suppose by way of contradiction that $L$ is a prime distance labeling of $G$.  By Lemma~\ref{K122-lemma}, we may assume that $L(x)=0$, $L(y_1)=2$, $L(y_2)=-2$, $L(z_1)=5$, and $L(z_2)=-5$.  Then, again by Lemma~\ref{K122-lemma}, $L(z_3)=5$ or $L(z_3)=-5$, which duplicates the label on $z_1$ or $z_2$, a contradiction.

Now let $G=K_{2,2,2}$, and suppose $G$ has partite sets $\{x_1,x_2\}$, $\{y_1, y_2\}$, and $\{z_1,z_2\}$.  Suppose by way of contradiction that $L$ is a prime distance labeling of $G$. Again by Lemma~\ref{K122-lemma}, we may assume that $L(x_1)=0$, $L(y_1)=2$, $L(y_2)=-2$, $L(z_1)=5$, and $L(z_2)=-5$.  Then, once more by Lemma~\ref{K122-lemma}, $L(x_2)=0$ also, a contradiction.
%
%

\begin{figure}
\begin{center}
\includegraphics[width=2in]{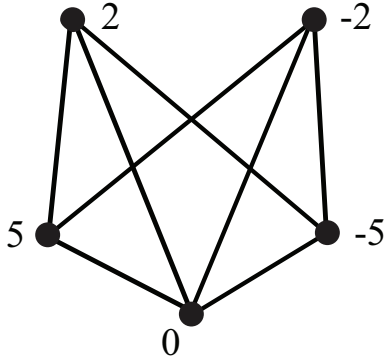} 
\caption{A prime distance labeling of $K_{1,2,2}$.} \label{K122} 
\end{center}
\end{figure}

\end{proof}

\begin{corollary}
We have $\ppn(K_{1,1,c})=1$ for all positive integers $c$ if and only if the Twin Prime Conjecture is true.
\end{corollary}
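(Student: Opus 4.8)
The plan is to derive this corollary directly from part (1) of Theorem~\ref{3-partite}, which states that $\ppn(K_{1,1,c})=1$ if and only if there exist $c$ pairs of twin primes. The corollary is essentially a reformulation: it quantifies over all $c$ and connects the resulting universally-quantified statement to the Twin Prime Conjecture, which asserts that there are infinitely many pairs of twin primes. So the entire task reduces to showing that the statement ``for all positive integers $c$, there exist $c$ pairs of twin primes'' is logically equivalent to ``there are infinitely many twin prime pairs.''

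First I would prove the forward direction. Suppose $\ppn(K_{1,1,c})=1$ for all positive integers $c$. By Theorem~\ref{3-partite}(1), this means that for every $c$ there exist $c$ distinct pairs of twin primes. I would argue that this forces the set of twin prime pairs to be infinite: if there were only finitely many, say exactly $N$ of them, then for $c=N+1$ we could not find $c$ distinct pairs, contradicting the hypothesis. Hence the Twin Prime Conjecture holds.

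Next I would prove the converse. Assume the Twin Prime Conjecture is true, so there are infinitely many twin prime pairs. Then for any fixed positive integer $c$, we can certainly select $c$ distinct such pairs, so by Theorem~\ref{3-partite}(1) we have $\ppn(K_{1,1,c})=1$. Since $c$ was arbitrary, this holds for all positive integers $c$, completing the equivalence.

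There is no genuine mathematical obstacle here: the proof is a short logical manipulation built entirely on Theorem~\ref{3-partite}(1), so the ``hard part'' was already dispatched in establishing that theorem (which in turn rests on the 2-odd characterization of Theorem~\ref{2-odd-characterization} and the red-blue coloring argument). The only point requiring mild care is the precise reading of ``$c$ pairs of twin primes'' as \emph{$c$ distinct} pairs, so that finiteness of the twin prime set genuinely caps the achievable value of $c$; once that is pinned down, the counting argument in the forward direction is immediate.
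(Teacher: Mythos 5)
Your proposal is correct and matches the paper's (implicit) argument: the corollary is stated without proof precisely because it follows from Theorem~\ref{3-partite}(1) by the quantifier/finiteness argument you give. The only point of care you flag --- reading ``$c$ pairs'' as $c$ distinct pairs --- is indeed the right reading, and your handling of it is fine.
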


We now consider complete 4-partite graphs $K_{a,b,c,d}$ for positive integers $a$, $b$, $c$, and $d$.

\begin{theorem} \label{4-partite}
A 4-partite complete graph $G=K_{a,b,c,d}$ satisfies $\ppn(G) = 1$ if and only if $a=b=c=d=1$.
\end{theorem}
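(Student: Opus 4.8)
The plan is to prove both implications, reducing the nontrivial direction to a single five-vertex obstruction. For the easy direction, when $a=b=c=d=1$ we have $G=K_4$, and the formula $\ppn(K_n)=\lceil \log_2(n)\rceil-1$ gives $\ppn(K_4)=1$; explicitly, $0,2,5,7$ is a prime distance labeling. For the converse, I would use monotonicity of $\ppn$ together with Theorem~\ref{k-partite}: since $G$ is $4$-chromatic, $\ppn(G)\in\{1,2\}$, so it suffices to show $\ppn(G)\neq 1$ whenever some part has size at least $2$. Any such $K_{a,b,c,d}$ contains $K_{1,1,1,2}$ as a subgraph (take one vertex from three of the parts and two vertices from a part of size $\geq 2$), so by monotonicity it is enough to prove that $K_{1,1,1,2}$ is \emph{not} a prime distance graph.

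The main tool will be two elementary parity facts about prime differences. First, three integers that are pairwise adjacent (so have pairwise prime differences) cannot all share the same parity, since two of them would then differ by an even number at least $4$; consequently a $K_4$ prime distance labeling has exactly two even and two odd labels, and any triangle in such a labeling splits $2$--$1$ by parity. Second, if $m-2,\,m,\,m+2$ are all prime in absolute value, then $\{|m-2|,|m|,|m+2|\}=\{3,5,7\}$ and $|m|=5$, because one of three integers in arithmetic progression with common difference $2$ is divisible by $3$. This second fact is exactly the mechanism already used in Lemma~\ref{K122-lemma}.

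Now suppose for contradiction that $K_{1,1,1,2}$, with triangle $\{a,b,c\}$ and nonadjacent pair $\{z_1,z_2\}$ each joined to $a,b,c$, has a prime distance labeling $L$. Then $\{a,b,c,z_1\}$ and $\{a,b,c,z_2\}$ each induce $K_4$, so both are $K_4$ prime distance labelings. By the first fact the triangle splits $2$--$1$ by parity; say $a,b$ have one parity and $c$ the other. To make each copy of $K_4$ have a $2$--$2$ parity split, $z_1$ and $z_2$ must both have $c$'s parity, and each, being adjacent to $c$ with the same parity, must satisfy $|L(z_i)-L(c)|=2$. Since $z_1\neq z_2$, their labels are $L(c)-2$ and $L(c)+2$. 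But then $a$ is adjacent to the three vertices labeled $L(c)-2,\,L(c),\,L(c)+2$, so by the second fact $|L(a)-L(c)|=5$, and likewise $|L(b)-L(c)|=5$. Hence $L(a)$ and $L(b)$ are the two values $L(c)\pm5$, so $|L(a)-L(b)|=10$, contradicting that $a$ and $b$ are adjacent and must have prime difference. This rules out a prime distance labeling of $K_{1,1,1,2}$ and completes the converse.

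I expect the crux to be the parity bookkeeping that pins down $z_1,z_2$ as $L(c)\pm 2$ and thereby forces the two same-parity triangle vertices to be equidistant from $c$; once that rigidity is in hand, the divisibility-by-$3$ observation delivers the contradiction immediately. The reductions (the explicit $K_4$ labeling, subgraph containment, and the invocation of Theorem~\ref{k-partite}) are routine by comparison.
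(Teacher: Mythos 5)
Your proof is correct, and the overall architecture matches the paper's: both directions reduce the converse to showing that $K_{1,1,1,2}$ admits no prime distance labeling, then invoke monotonicity (every complete $4$-partite graph other than $K_4$ contains $K_{1,1,1,2}$). Where you diverge is in how you kill $K_{1,1,1,2}$. The paper deletes the edge between two of the singleton parts to obtain a copy of $K_{1,2,2}$ and then quotes Lemma~\ref{K122-lemma}, which (via the 2-odd characterization of Theorem~\ref{2-odd-characterization}) rigidly pins the labels to $0,\pm2,\pm5$; the forced values $L(y),L(z)\in\{\pm2\}$ or $\{\pm5\}$ make $L(yz)\in\{4,10\}$, a contradiction. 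You instead argue directly and self-containedly on the two induced $K_4$'s sharing the triangle $\{a,b,c\}$: no triangle is parity-monochromatic, so the triangle splits $2$--$1$, forcing $z_1,z_2$ to share $c$'s parity and hence to be labeled $L(c)\pm2$; then the divisibility-by-$3$ observation on the progression $L(c)-2,L(c),L(c)+2$ forces $|L(a)-L(c)|=|L(b)-L(c)|=5$ and the non-prime difference $|L(a)-L(b)|=10$. Each step checks out (the only even prime difference between same-parity adjacent labels is $2$; among $m-2,m,m+2$ one is divisible by $3$, so all three prime in absolute value forces $|m|=5$). What your route buys is independence from Lemma~\ref{K122-lemma} and the 2-odd machinery, at the cost of re-deriving by hand the same rigidity (the $\pm2,\pm5$ structure) that the lemma packages once and reuses for the $3$-partite cases as well; in the context of the paper the lemma-based proof is shorter, but yours would stand alone.
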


\begin{proof}
First note that $K_{1,1,1,1}=K_4$ is a prime distance graph with vertex labeling 0, 2, 3, 5.  We prove that $\ppn(K_{1,1,1,2})=2$.  Let $G=K_{1,1,1,2},$ and suppose $G$ has partite sets $\{x\}$, $\{y\}$, $\{z\}$, and $\{w_1, w_2\}$.

Suppose by way of contradiction that $L$ is a prime distance labeling of $G$ with $L(x)=0$. Let $G'$ be the graph obtained by deleting the edge $yz$ from $G$.  Note that $L$ is also a prime distance labeling of $G'$.  Since $G' \cong K_{1,2,2}$, by Lemma~\ref{K122-lemma}, $L(y)=2$ and $L(z)=-2$, or $L(y)=-2$ and $L(z)=2$, or $L(y)=5$ and $L(z)=-5$, or $L(y)=-5$ and $L(z)=5$.  In all cases, $L(yz)$ is not prime, a contradiction, so $\ppn(G)=2$.

%

Since $G$ is a subgraph of every complete 4-partite graph other than $K_{1,1,1,1}$, every other complete 4-partite graph has $\ppn(G)= 2$ by Theorem~\ref{k-partite}.


\end{proof}

\section{Prime Power Distance Graphs}

Given a positive integer $k$, we say that $G$ is a \textit{\textbf{prime $k$th-power distance graph}} if we can label the vertices of $G$ with distinct integers so that for every edge $uv$ of $G$, $|L(u)-L(v)|=p^j$ for some prime $p$ and some positive integer $j \leq k$.  We call $L$ a \textit{\textbf{prime $k$th-power distance labeling}} of $G$.  Note that the prime distance graphs are the prime first-power graphs.  Also observe that if $G$ is a prime $k$th-power graph, then so is every subgraph of $G$.  As is the case with $k$-prime product distance graphs, if $G$ is a prime $k$th-power distance graph, then $G$ is a prime $m$th-power distance graph for all $m \geq k$.


\begin{lemma}\label{K4-not-PPD}
In a prime $k$th-power distance labeling of $K_4$, the vertex labels cannot all have the same parity.
\end{lemma}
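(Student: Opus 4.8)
The plan is to argue by contradiction. Suppose $K_4$ has a prime $k$th-power distance labeling $L$ in which all four vertex labels share the same parity. Since we may translate all labels by a constant without changing any pairwise difference, I would first normalize so that one vertex is labeled $0$; then all four labels are even. The key structural observation is that among the four vertices, every pairwise difference $|L(u)-L(v)|$ is even, and since it must be a prime power $p^j$, the only even prime power available is a power of $2$. Thus \emph{every} one of the six edge differences in $K_4$ must be a power of $2$.

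Next I would extract the arithmetic constraint this forces. Label the four vertices $a<b<c<d$ (all even, so all differences are positive powers of $2$). The three ``nested'' differences along the sorted order give $b-a=2^{s}$, $c-b=2^{t}$, $d-c=2^{u}$ for positive integers $s,t,u$, and the three ``spanning'' differences $c-a=2^{s}+2^{t}$, $d-b=2^{t}+2^{u}$, and $d-a=2^{s}+2^{t}+2^{u}$ must \emph{also} each be a power of $2$. The heart of the argument is the elementary fact that a sum of two positive powers of $2$ is itself a power of $2$ only when the two powers are equal (since $2^{p}+2^{q}=2^{p}(1+2^{q-p})$ with $p\le q$ has an odd factor $1+2^{q-p}>1$ unless $p=q$). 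I would apply this to $c-a=2^{s}+2^{t}$ to force $s=t$, and likewise to $d-b=2^{t}+2^{u}$ to force $t=u$, so $s=t=u$. But then $c-a=2^{s+1}$ and the three-term sum $d-a=2^{s}+2^{s}+2^{s}=3\cdot 2^{s}$ has an odd factor $3$, so it is not a power of $2$, contradicting the requirement that $d-a$ be a power of $2$. This contradiction completes the proof.

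The step I expect to be the main obstacle — or at least the one requiring the most care — is verifying that \emph{all six} differences, including the spanning ones, are genuinely constrained to be powers of $2$, and then organizing the casework on the exponents cleanly rather than checking configurations ad hoc. The clean route is the sorted-order reduction above, which reduces six constraints to the three consecutive gaps $2^{s},2^{t},2^{u}$ plus the partial sums, after which the ``sum of two powers of $2$ is a power of $2$ iff the exponents agree'' lemma does all the work. An alternative, possibly shorter, packaging would be to note that $(b-a)+(c-b)+(d-c)=d-a$ expresses one power of $2$ as a sum of three powers of $2$, and directly analyze when $2^{s}+2^{t}+2^{u}$ can be a power of $2$; this again reduces to the same parity-of-exponents obstruction and yields the contradiction. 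Either way, the essential content is purely the impossibility of writing a power of $2$ as a sum of three positive powers of $2$ with the intermediate two-term sums also being powers of $2$.
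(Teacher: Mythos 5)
Your proof is correct and follows essentially the same route as the paper's: both reduce the problem to the observation that all six pairwise differences must be powers of $2$ and then exploit the fact that a sum or difference of two powers of $2$ with distinct exponents carries a nontrivial odd factor. Your sorted-gap bookkeeping (forcing $b-a=2^{s}$, $c-b=2^{t}$, $d-c=2^{u}$ with $s=t=u$ and then $d-a=3\cdot 2^{s}$) is a marginally cleaner organization than the paper's signed-coordinate casework around a base vertex labeled $0$, but the underlying argument is the same.
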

\begin{proof}
Suppose the vertices of $K_4$ are $a$, $b$, $c$, and $d$.  Let $L$ be a prime $k$th-power distance labeling of $K_4$, and suppose by way of contradiction that $L(a)$, $L(b)$, $L(c)$, and $L(d)$ have the same parity. Then the differences between these labels are all even, so each must be a power of 2.  Without loss of generality, $L(a)=0$, $L(b)=2^i$, $L(c)=\pm 2^j$, and $L(d)= \pm 2^k$, for positive integers $i$, $j$, and $k$ with $i \geq j$. We see that
$$|L(b)-L(c)| = |2^i \pm 2^j| = 2^j(2^{i-j} \pm 1).$$
This is not a prime power unless $i = j$, so $|L(b)-L(c)|=0$ or $|L(b)-L(c)|=2^{i+1}$. Since $L(c)\neq L(b)$ we must have $|L(b)-L(c)|=2^{i+1}$ and $L(c) = -2^i$.

Similarly, if $i\ge k,$ then $L(d)=-2^i=L(c)$, which is a contradiction.  Thus $k>i$.  Now $|L(b)-L(d)|=|2^i\pm 2^k|=2^i|1\pm 2^{k-i}|$. Since $k>i$, this is not a prime power, which is a contradiction.

Therefore, the vertices of $K_4$ cannot have four labels of the same parity.
\end{proof}

\begin{theorem}
There exists a positive integer $k$ such that $K_n$ is a prime $k$th-power distance graph if and only if $n < 7$.
\end{theorem}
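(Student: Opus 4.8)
The plan is to prove both implications by reducing to two specific complete graphs, using the fact (stated above) that every subgraph of a prime $k$th-power distance graph is again a prime $k$th-power distance graph. For the ``if'' direction it suffices to exhibit a single prime $k$th-power distance labeling of $K_6$, since every $K_n$ with $n < 7$ is a subgraph of $K_6$. For the ``only if'' direction it suffices to show that $K_7$ admits no prime $k$th-power distance labeling for any $k$, since $K_7$ is a subgraph of $K_n$ whenever $n \geq 7$.

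For the constructive direction, I would first narrow the search using a parity analysis in the spirit of Lemma~\ref{K4-not-PPD}. In any prime power labeling, a difference of two integers of the same parity is even and hence must be a power of $2$; a short computation shows that three integers of the same parity with all pairwise differences powers of $2$ must form an arithmetic progression whose common difference is a power of $2$. By Lemma~\ref{K4-not-PPD} no four of the six labels may share a parity, so the labels must split into three even and three odd values, each triple an arithmetic progression with power-of-$2$ common difference, and with all nine even--odd differences odd prime powers. This structure makes the search short, and it produces the labeling
\[
0,\ 2,\ 4,\ 7,\ 9,\ 11 .
\]
One then checks the $\binom{6}{2}=15$ pairwise differences directly: the differences occurring are $2, 4, 5, 7, 9, 11$, each a prime power (with $4=2^2$ and $9=3^2$), so this is a prime $2$nd-power distance labeling of $K_6$. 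Hence every $K_n$ with $n < 7$ is a prime $2$nd-power distance graph, and in particular a prime $k$th-power distance graph for $k=2$.

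For the negative direction, suppose for contradiction that $K_7$ has a prime $k$th-power distance labeling for some $k$. Among the seven labels, the pigeonhole principle yields at least $\lceil 7/2 \rceil = 4$ of the same parity. These four vertices induce a copy of $K_4$ in $K_7$ whose labels all share a parity, contradicting Lemma~\ref{K4-not-PPD}. Thus no such labeling exists, and the same conclusion extends to every $K_n$ with $n \geq 7$, since each contains $K_7$ as a subgraph.

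The verification of the fifteen differences is mechanical; the genuine work lies in the constructive direction, where one must locate the explicit $K_6$ labeling. I expect the main conceptual obstacle to be recognizing that the parity argument forces both the $3$--$3$ split and the arithmetic-progression form of each parity class, which reduces the construction to a finite and manageable search. Once that labeling is in hand, the negative direction for $K_7$ follows almost immediately from Lemma~\ref{K4-not-PPD} together with the pigeonhole principle.
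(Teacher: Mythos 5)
Your proof is correct and follows essentially the same route as the paper: the pigeonhole argument combined with Lemma~\ref{K4-not-PPD} for $n \geq 7$, and the explicit labeling $0, 2, 4, 7, 9, 11$ of $K_6$ for $n < 7$ (the paper simply exhibits the labeling without the parity-based search narrative). One trivial slip: your list of pairwise differences omits $3 = 7 - 4$, but since $3$ is prime this does not affect the argument.
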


\begin{proof}
If $n \geq 7$, by the Pigeonhole Principle any labeling of $K_n$ contains a $K_4$-subgraph in which  all vertices have the same parity. By Lemma \ref{K4-not-PPD}, this implies that $K_n$ has no prime $k$th-power distance labeling.

On the other hand, the complete graphs $K_n$ with $1 \leq n \leq 6$ have explicit prime $k$th-power distance labelings as follows.  The complete graph $K_4$ has a prime distance labeling  with labels $0$, $2$, $5$, and $7$, so $K_1$, $K_2$, $K_3$, and $K_4$ are prime distance graphs.  The complete graph $K_6$ has a prime-squared distance labeling with labels $0$, $2$, $4$, $7$, $9$, and $11$, so $K_5$ and $K_6$ are prime-squared graphs. (The graphs $K_5$ and $K_6$ are not prime distance graphs by \cite[Lemma~6]{Eggleton85}).

Thus $K_n$ is a prime $k$th-power distance graph if and only if $n< 7.$
\end{proof}

\subsection{Strict Prime Power Graphs}

Given a positive integer $k$, we say that $G$ is a \textit{\textbf{strict prime $k$th-power distance graph}} if we can label the vertices of $G$ with distinct integers so that for every edge $uv$ of $G$, $|L(u)-L(v)|=p^k$.  We call $L$ a \textit{\textbf{strict prime $k$th-power distance labeling}} of $G$.  In this case, it is not true that if $G$ is a strict prime $k$th-power distance graph, then $G$ is a strict prime $m$th-power distance graph for all $m \geq k$.  For example, we know that $K_3$ is a prime first-power distance graph, but we show in Proposition~\ref{C3} that $K_3$ is not a prime second-power distance graph.
In this section we investigate strict prime power labelings of cycles and outerplanar graphs.

\begin{proposition}
The graph $C_3 \cong K_3$ is not a strict prime $k$th-power graph for any $k\ge 2$. \label{C3}
\end{proposition}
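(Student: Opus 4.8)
The plan is to translate so that one vertex carries the label $0$; since adding a constant to every label leaves each difference $|L(u)-L(v)|$ unchanged, we may assume the three vertices of $C_3$ receive labels $0$, $b$, and $c$ for distinct nonzero integers $b$ and $c$. A strict prime $k$th-power labeling then forces each of the three edge differences $|b|$, $|c|$, and $|b-c|$ to be exactly the $k$th power of a prime, say $|b| = p^k$, $|c| = q^k$, and $|b-c| = r^k$ for primes $p$, $q$, $r$.

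Next I would run a short case analysis on the signs of $b$ and $c$. If $b$ and $c$ have the same sign, then $|b-c| = \bigl|\, |b| - |c| \,\bigr|$, and after relabeling so that $|b| > |c|$ we obtain $p^k = q^k + r^k$. If instead $b$ and $c$ have opposite signs, then $|b-c| = |b| + |c|$, which gives $r^k = p^k + q^k$. Either way the labeling produces primes $x$, $y$, $z$ satisfying
\[
x^k + y^k = z^k.
\]
Thus the existence of a strict prime $k$th-power labeling of $C_3$ is equivalent to the existence of a solution of this Fermat equation in primes, and it suffices to rule such solutions out for every $k \ge 2$.

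For $k \ge 3$ the conclusion is immediate from Fermat's Last Theorem, which forbids any solution in positive integers, let alone in primes. The single case needing a separate argument---and the main point of the proof---is $k = 2$, where I would show that no Pythagorean triple consists entirely of primes. If $x$ and $y$ were both odd primes, then $x^2 + y^2 \equiv 2 \pmod 4$, which is never a square, so one of $x$, $y$ must equal $2$; say $y = 2$. Then $z^2 - x^2 = (z-x)(z+x) = 4$, and since $z^2 = x^2 + 4$ forces $z > x \ge 2$, we have $z + x \ge 5 > 4$ while $z - x \ge 1$, so the product is at least $5$, contradicting $(z-x)(z+x) = 4$. This disposes of $k = 2$ and, together with the Fermat's Last Theorem argument, completes the proof.
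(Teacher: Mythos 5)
Your proof is correct and follows essentially the same route as the paper: reduce the triangle to a prime solution of $x^k+y^k=z^k$, invoke Fermat's Last Theorem for $k\ge 3$, and rule out an all-prime Pythagorean triple for $k=2$. The only difference is one of detail: where the paper cites its 2-odd characterization to force a label of $2$ and then asserts that no such Pythagorean triple exists, you supply the parity argument mod $4$ and the factorization $(z-x)(z+x)=4$ explicitly, which makes the $k=2$ case self-contained.
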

\begin{proof}
Suppose by way of contradiction that $L$ is a strict prime $k$th-power distance labeling of $C_3$, with edges labeled $a^k$, $b^k$, and $c^k$, where $c>a,b$.  Then $a^k+b^k=c^k$.  If $k=2$, then $a^2+b^2=c^2$, and $a$, $b$, and $c$ are a Pythagorean triple.  But one of $a$, $b$, and $c$ is 2 by Lemma~\ref{2-odd-characterization}, and no such Pythagorean triple exists.  If $k>2$, then no such triple of numbers exists by Fermat's Last Theorem \cite{Cox94}.
\end{proof}

%

\begin{theorem}\label{even-cycles}
For all positive integers $k$ and $n$ with $n\ge 2$, the even cycle $C_{2n}$ is a strict prime $k$th-power graph.  Furthermore, we may pick the labels in a strict prime $k$th-power distance labeling of $C_{2n}$ to be arbitrarily large.
\end{theorem}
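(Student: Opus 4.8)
The plan is to exhibit an explicit labeling by walking around the cycle and choosing each edge-difference to be a prime $k$th power, with signs arranged so that the walk automatically returns to its starting value. Write the vertices in cyclic order as $v_1, v_2, \ldots, v_{2n}$, and think of the labeling as a sequence of signed steps $L(v_{i+1}) - L(v_i) = \pm a_i$, where each $a_i$ is a prime $k$th power and indices are taken modulo $2n$. Since $C_{2n}$ has evenly many edges, I would use $n$ prime $k$th powers $a_1, \ldots, a_n$, each exactly twice: take the first $n$ steps to be $+a_1, +a_2, \ldots, +a_n$ and the last $n$ steps to be $-a_1, -a_2, \ldots, -a_n$. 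Then the total signed displacement around the cycle is $\sum_{i=1}^n a_i - \sum_{i=1}^n a_i = 0$, so the walk closes up and the labeling is consistent, while every edge difference has absolute value some $a_i = p_i^k$, as required. Fixing a base value $L(v_1) = M$, the remaining labels are the partial sums $L(v_{j+1}) = M + (a_1 + \cdots + a_j)$ for $0 \le j \le n$ (the ascending half, ending at the peak $v_{n+1}$) and $L(v_{n+1+i}) = M + (a_{i+1} + \cdots + a_n)$ for $1 \le i \le n-1$ (the descending half).

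The crux is to guarantee that these $2n$ labels are pairwise distinct, which is exactly where I would exploit the freedom to choose the primes. I would take $p_1 < p_2 < \cdots < p_n$ to be a \emph{super-increasing} sequence of primes, meaning $a_i = p_i^k > a_1 + a_2 + \cdots + a_{i-1}$ for each $i$; such a sequence exists because, given $p_1, \ldots, p_{i-1}$, the infinitude of the primes lets me pick a prime $p_i$ exceeding $(a_1 + \cdots + a_{i-1})^{1/k}$. With this choice the ascending labels strictly increase in $j$ and the descending labels strictly decrease in $i$, so distinctness within each half is immediate. For a possible collision between the halves I would compare a prefix sum $a_1 + \cdots + a_j$ (with $j \le n-1$) against a suffix sum $a_{i+1} + \cdots + a_n$ (with $i \le n-1$): the latter contains $a_n$, hence is at least $a_n$, while the former is at most $a_1 + \cdots + a_{n-1} < a_n$ by super-increasingness, ruling out equality; and the only prefix sum reaching the peak, $a_1 + \cdots + a_n$, strictly exceeds every proper suffix sum. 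This is the main obstacle, and the super-increasing condition is precisely what makes the prefix and suffix sums separate cleanly.

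Finally, the ``arbitrarily large'' clause is essentially free. The distinctness argument is invariant under translation, so adding a large constant $M$ to every label keeps all edge differences equal to the same prime $k$th powers while pushing every label above any prescribed bound. Alternatively, since $p_1$ may be taken as large as desired before building the super-increasing chain, the partial sums themselves are already arbitrarily large. Either way the construction yields a valid strict prime $k$th-power distance labeling of $C_{2n}$ with arbitrarily large labels.
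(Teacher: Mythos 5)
Your proposal is correct and follows essentially the same route as the paper: label the cycle by prefix sums of $n$ prime $k$th powers on the ascending half and the corresponding suffix sums on the descending half, so each $p_i^k$ appears as an edge difference exactly twice, and enforce distinctness by making the largest prime power dominate the sum of the others (the paper requires only $\sum_{i=1}^{n-1} p_i^k < p_n^k$, whereas your super-increasing condition is a slightly stronger but equally valid choice). The handling of the ``arbitrarily large'' clause is likewise the same in spirit, exploiting the freedom in choosing the primes.
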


\begin{proof}
Suppose the vertices of $C_{2n}$ are $x_1$, $x_2$, $\ldots$, $x_{2n}$.  Let $p_1$, $\ldots$, $p_n$ be primes such that $\sum_{i=1}^{n-1} p_i^k < p_n^k$.  Label the vertices of $C_{2n}$ as follows:
$L(x_1)=0$, $L(x_j)=\sum_{i=1}^{j-1} p_i^k$ for $1 \leq j \leq n$, and  $L(x_j)=\sum_{i=j-n}^{n} p_i^k$ for $n+1 \leq j \leq 2n$.
By construction, the difference between any two consecutive labels is the power of a prime.  Also note that since we require $\sum_{i=1}^{n-1} p_i^k < p_n^k$, $\sum_{i=1}^{j-1} p_i^k \neq \sum_{i=m-n}^{n} p_i^k$ for any integers $j$ and $m$ with $1\le j\le n, n+1\le m\le 2n$, so vertex labels are distinct.  Finally, since $p_1$, $\ldots$, $p_n$ can be any primes satisfying this inequality, we may pick them to be arbitrarily large.
\end{proof}

\begin{theorem}\label{odd-cycles}
If an odd cycle $C_{2n+1}$ is a strict $k$th-power prime distance graph, then so is every larger odd cycle $C_{2(n+j)+1}$, $j>1$.
\end{theorem}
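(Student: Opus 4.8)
My plan is to realize the larger cycle $C_{2(n+j)+1}=C_{2n+2j+1}$ by starting from a strict prime $k$th-power distance labeling $L$ of $C_{2n+1}$ and \emph{subdividing a single edge} into a path of $2j+1$ edges. Choosing any edge $uv$ and replacing it by a path $u,w_1,\ldots,w_{2j},v$ inserts $2j$ new vertices and turns one edge into $2j+1$ edges, so the result has $2n+2j+1$ vertices and edges, i.e.\ it is $C_{2(n+j)+1}$. The whole problem therefore reduces to the following: the endpoint labels $L(u)$ and $L(v)$ are fixed with $|L(u)-L(v)|=q^k$ for some prime $q$, and I must build a self-avoiding walk $w_0=L(u),w_1,\ldots,w_{2j+1}=L(v)$ whose consecutive differences are all prime $k$th powers and whose interior values $w_1,\ldots,w_{2j}$ are distinct and disjoint from the $2n+1$ labels already in use on $C_{2n+1}$.

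To build this walk I would use a ``mountain'' in the spirit of the construction in Theorem~\ref{even-cycles}. Pick $j$ primes $p_1<\cdots<p_j$ that are super-increasing (each $p_i^k$ exceeding the sum of all smaller prime $k$th powers in play), larger than $q$, and large enough that $p_1^k$ exceeds the diameter of the set of labels already used. Assuming without loss of generality that $L(v)-L(u)=q^k$ (otherwise reflect), I take the $j$ ascending steps $+p_1^k,\ldots,+p_j^k$, then one more ascending step $+q^k$ to reach the peak, then the $j$ descending steps $-p_1^k,\ldots,-p_j^k$. Measured from $L(u)$, the displacements are the prefix sums $0,\,p_1^k,\,p_1^k+p_2^k,\,\ldots,\,\sum_{i=1}^{j}p_i^k$ on the way up, the peak value $\sum_{i=1}^{j}p_i^k+q^k$, and the values $\sum_{i=2}^{j}p_i^k+q^k,\ldots,p_j^k+q^k,\,q^k$ on the way down, landing exactly at $q^k$, that is, at $L(v)$. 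Every consecutive difference is $\pm p_i^k$ or $\pm q^k$, so this is a legitimate strict prime $k$th-power path of the required length $2j+1$, and splicing it in place of $uv$ while keeping $L$ on the remaining $2n$ vertices yields a candidate labeling of $C_{2(n+j)+1}$.

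The crux, and the only genuinely delicate point, is self-avoidance together with disjointness from the old labels, and this is exactly what the super-increasing, large choice of the $p_i$ secures. Because each $p_i^k$ dominates the sum of all smaller prime powers involved, a displacement is pinned down by the largest prime power it contains; hence an ascending prefix sum (which omits $q^k$) can never equal a descending value (which includes $q^k$), since agreement of their largest indices would force $\sum_{i=1}^{b-1}p_i^k=q^k$, impossible once $p_1>q$. The same domination makes the ascending displacements strictly increasing and the descending ones strictly decreasing, so all interior labels are distinct, and since every interior displacement is at least $p_1^k$, which exceeds the spread of the original labels, none collides with a label of the old cycle. The one subtlety worth flagging in the write-up is \emph{why the single $q^k$-step is placed at the peak rather than first}: placing it first would set $w_1=L(v)$ and collide with the endpoint, whereas at the peak the walk attains height $q^k$ only at its final vertex. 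This construction works for every $j\ge 1$, so in particular it establishes the theorem for all $j>1$.
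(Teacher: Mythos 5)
Your proof is correct and uses essentially the same idea as the paper's: insert $2j$ new vertices by taking $j$ fresh primes, each contributing one positive and one negative $k$th-power step, chosen so large (and super-increasing) that all new labels are forced to be distinct from each other and from the old ones. The only difference is cosmetic — the paper places the ascending staircase after the first edge and the descending one at the end, lifting the intervening stretch of the old cycle by $\sum q_i^k$, whereas you localize both staircases into a single "mountain" subdividing one edge and leave every original label untouched, which is if anything slightly cleaner.
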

\begin{proof}
Suppose the vertices of $C_{2n+1}$ are $x_1$, $x_2$, $\ldots$, $x_{2n+1}$, and that $L$ is a strict $k$th-power prime distance labeling of $C_{2n+1}$ with $L(x_ix_{i+1})=p_i^k$ for some primes $p_i$, $1 \leq i \leq 2n$.  Without loss of generality we may assume $L(x_1)=0$, so $L(x_m)=\sum_{i=1}^{m-1} a_i p_i^k$, where $a_i= \pm 1$,  for $2 \leq m \leq 2n+1$, and $L(x_{2n+1})=\sum_{i=1}^{2n} a_i p_i^k$ must also be a prime $k$th power $p_{2n+1}^k$.


Suppose $C_{2n+2j+1}$ has vertices $z_1$, $z_2$, $\ldots$, $z_{2n+2j+1}$.  Choose primes $q_1$, $\ldots$, $q_j$, all larger than $|p_i|$ for all $1 \leq i \leq 2n+1$, and such that $q_j>\sum_{i=1}^{2n} p_i^k ++\sum_{i=1}^{j-1} q_i^k$.  We define a labeling $L'$ of $C_{2n+2j+1}$ by
$$L'(z_m)=\left\{
  \begin{array}{ll}
   0 & \text{ if } m=1, \\
   a_1 p_1^k & \text{ if } m=2, \\
   a_1 p_1^k+\sum_{i=1}^{m-2} q_i^k & \text{ if } 3 \leq m \leq j+2, \\
   \sum_{i=1}^{m-j-1} a_i p_i^k +\sum_{i=1}^j q_i^k & \text{ if } j+3 \leq m \leq j+2n+1, \\
   \sum_{i=m-j-2n-1}^j q_i^k & \text{ if } j+2n+2 \leq m \leq 2j+2n+1. \\
  \end{array}
\right. $$
By construction, the difference between any two consecutive labels is the power of a prime, and by our choice of the primes $q_i$, the labels on vertices are distinct.


\end{proof}

\begin{corollary} If $C$ is an odd cycle that is a strict $k$th-power prime distance graph, then so is every larger cycle.
\end{corollary}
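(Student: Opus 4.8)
The plan is to split ``every larger cycle'' according to parity and then invoke the two preceding theorems directly. Writing $C = C_{2n+1}$ (so that $n \geq 1$, since an odd cycle has at least three vertices), any cycle strictly larger than $C$ is either an even cycle $C_{2\ell}$ with $2\ell > 2n+1$ or an odd cycle $C_{2\ell+1}$ with $2\ell+1 > 2n+1$. I would handle these two families separately, and in each case produce the required strict prime $k$th-power distance labeling by quoting an earlier result.

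For the even targets I would simply appeal to Theorem~\ref{even-cycles}. The inequality $2\ell > 2n+1 \geq 3$ forces $\ell \geq 2$, which is exactly the hypothesis of Theorem~\ref{even-cycles}, so that theorem hands us a strict prime $k$th-power labeling of $C_{2\ell}$ outright. It is worth emphasizing that this step uses nothing about $C$: every even cycle on at least four vertices is already a strict prime $k$th-power graph unconditionally, so the genuine content of the corollary lies entirely in the odd cycles.

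For the odd targets $C_{2\ell+1}$ with $2\ell+1 > 2n+1$ I would write $\ell = n+j$ with $j \geq 1$ and apply Theorem~\ref{odd-cycles}, which promotes a strict prime $k$th-power labeling of $C_{2n+1}$ to one of $C_{2(n+j)+1}$. Here the hypothesis that $C$ itself is a strict prime $k$th-power graph is precisely what seeds the construction in Theorem~\ref{odd-cycles}, so this is where the assumption on $C$ is actually consumed.

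I do not expect any genuine obstacle: the two theorems together already cover both parities of every larger cycle. The only care required is the elementary bookkeeping on sizes, namely verifying that the even targets satisfy $\ell \geq 2$ so that Theorem~\ref{even-cycles} applies, and that the odd targets are strictly larger so that $j \geq 1$ in Theorem~\ref{odd-cycles}; both follow immediately from $n \geq 1$.
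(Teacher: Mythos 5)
Your argument is correct and is exactly what the paper intends: the corollary is stated without proof as the immediate consequence of splitting the larger cycles by parity and quoting Theorem~\ref{even-cycles} for the even ones and Theorem~\ref{odd-cycles} for the odd ones. The one point to be aware of is that Theorem~\ref{odd-cycles} is stated with ``$j>1$'' rather than $j\ge 1$, so your case $j=1$ (the next odd cycle $C_{2n+3}$) technically falls outside its literal hypothesis; the construction in its proof does handle $j=1$, so this is evidently a typo in the theorem statement rather than a gap in your reasoning, but it is worth noting that you are relying on that corrected reading.
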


\begin{theorem}\label{existence}
For all $k \geq 1$ there exists some odd $N$ such that $C_N$ is a strict prime $k$th-power distance graph.
\end{theorem}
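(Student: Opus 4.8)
The plan is to use the Corollary following Theorem~\ref{odd-cycles} to reduce the statement to a single construction: it suffices to produce, for each fixed $k$, just \emph{one} odd integer $N$ for which $C_N$ admits a strict prime $k$th-power distance labeling, since every larger cycle then inherits such a labeling. I would first recast the problem arithmetically. Fixing $L(x_1)=0$ and walking around $C_N$, a strict prime $k$th-power labeling corresponds exactly to a choice of primes $p_1,\dots,p_N$ and signs $\epsilon_i\in\{\pm1\}$ with
$$\sum_{i=1}^{N}\epsilon_i\,p_i^{\,k}=0,$$
together with the requirement that the partial sums $L(x_m)=\sum_{i<m}\epsilon_i p_i^{k}$ be pairwise distinct. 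So the whole theorem reduces to exhibiting, for every $k$, an \emph{odd-length} signed zero-sum of prime $k$th powers that is realizable with distinct partial sums.

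The key preliminary observation is a parity constraint: reducing the zero-sum modulo $2$ shows that the number of indices with $p_i$ odd must be even, so when $N$ is odd the prime $2$ must occur an odd (in particular nonzero) number of times. This is exactly why even cycles are easy (Theorem~\ref{even-cycles}) while odd ones are not: the symmetric gadgets that cancel automatically, such as pairing equal primes with opposite signs, always have even length and never involve $2$.

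To build the odd part I would lean on the Green--Tao theorem \cite{green08}. Given a prime arithmetic progression $p, p+d, \dots, p+(k+1)d$ of length $k+2$, the $(k+1)$st finite difference of the degree-$k$ polynomial $j\mapsto (p+jd)^k$ vanishes, giving $\sum_{j=0}^{k+1}(-1)^{j}\binom{k+1}{j}(p+jd)^{k}=0$; expanding the binomial multiplicities yields a signed zero-sum of $2^{k+1}$ odd-prime $k$th powers. Likewise the $k$th finite difference gives $\sum_{j=0}^{k}(-1)^{k-j}\binom{k}{j}(p+jd)^{k}=k!\,d^{k}$, a signed sum of $2^{k}$ odd-prime $k$th powers equal to a controllable value. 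Combining a few such blocks from different progressions with an \emph{odd} number of edges of length $2^k$, I would try to cancel everything to $0$; the counts coming from the finite-difference blocks are even, so the single reservoir of $2^k$-edges is what flips the total length to odd.

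The hard part will be exactly this last cancellation, that is, controlling the prime $2$. The finite-difference blocks contribute values divisible by $k!$, hence (for $k\ge 2$) carrying many factors of $2$, whereas an odd number of copies of $2^k$ contributes $2$-adic valuation precisely $k$; matching these valuations while keeping every summand a genuine prime $k$th power is the crux, and it cannot be done with a single progression, since one checks that $v_2(k!\,d^{k})=k$ is unsolvable for $k\ge 2$. I therefore expect to need several progressions so that a combination $k!\sum_\ell \pm d_\ell^{k}$ realizes the required valuation, which is delicate because Green--Tao supplies progressions of \emph{some} common difference but does not let one prescribe $d$. Finally, distinctness of the vertex labels is the routine part: as in the proofs of Theorems~\ref{even-cycles} and \ref{odd-cycles}, I would order the signed summands and take the free primes to grow quickly enough that all partial sums are separated, inflating the cycle if necessary. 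Proposition~\ref{C3} already shows the minimal odd $N$ must exceed $3$ for $k\ge 2$, consistent with this construction producing larger cycles.
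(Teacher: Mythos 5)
Your reduction is right: by the corollary to Theorem~\ref{odd-cycles} it suffices to exhibit a single odd $N$ for each $k$, and your translation into a signed zero-sum $\sum_{i=1}^{N}\epsilon_i p_i^k=0$ with distinct partial sums, together with the mod-$2$ observation that an odd-length solution must use the prime $2$ an odd number of times, is correct and matches the structure of what actually happens. But the proof is not complete: the construction of the odd-length zero-sum is exactly the step you defer, and you say yourself that the $2$-adic bookkeeping ``cannot be done with a single progression'' and that handling several progressions is ``delicate'' because Green--Tao gives no control over the common difference $d$. That unresolved cancellation is the entire content of the theorem, so as written this is a plan with a gap at its crux rather than a proof.

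The gap closes with far less machinery than Green--Tao. The paper's argument is just B\'ezout's identity: since $\gcd(3^k,5^k)=1$, there are integers $r<0<s$ with $3^kr+5^ks=1$; multiplying by $2^k$ gives even integers $a=2^kr<0$ and $b=2^ks>0$ with $3^ka+5^kb=2^k$. This is precisely an odd-length signed zero-sum in your formulation, namely $b$ terms $+5^k$, then one term $-2^k$, then $|a|$ terms $-3^k$, of total length $N=b-a+1$, which is odd since $a$ and $b$ are even (and the single $2^k$-term is consistent with your parity constraint). Geometrically one labels $C_N$ by walking up from $0$ in steps of $5^k$ to $b\cdot 5^k$, dropping by $2^k$ to $|a|\cdot 3^k$, and walking back down in steps of $3^k$ to $3^k$ and then to $0$; the ascending and descending runs are monotone, which handles distinctness of labels without any growth conditions on auxiliary primes. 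So the lesson is that only three primes ($2$, $3$, $5$) are needed, and the multiplicities supplied by B\'ezout do all the work that your finite-difference blocks were meant to do.
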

\begin{proof}
Since $3^k$ and $5^k$ are relatively prime, there exist integers $r$ and $s$ such that $3^kr+5^ks=1$, $r<0$, and $s>0$.  Let $a=2^kr$ and $b=2^ks$.  Then $3^ka+5^kb=2^k$, $a<0$, and $b>0$.  We claim that $C_N$ is a strict prime $k$th-power graph if $N=b-a+1$.  Note that $a$ and $b$ are even, so $N$ is odd.  Suppose the vertices of $C_N$ are $x_1$, $x_2$, $\ldots$, $x_N$.  We define a strict prime $k$th-power distance labeling $L$ of $C_N$ by $L(x_1)=0$, $L(x_i)=(i-1) \cdot 5^k$ for $2 \leq i \leq b+1$, and $L(x_i)= (b+2-i-a) \cdot 3^k$ for $b+2 \leq i \leq b-a+1$.  For any pair of vertices $x_i$ and $x_{i+1}$, we have $|L(x_i)-L(x_{i+1})|=5^k$, $|L(x_i)-L(x_{i+1})|=3^k$, or $|L(x_i)-L(x_{i+1})|=2^k$ in the case $i=b+1$.
\end{proof}

We denote the least integer $N$ such that $C_n$ is a strict $k$th-power prime distance graph for all $n\ge N$ by \textbf{$\ppc(k)$}.  Theorem \ref{even-cycles}, Theorem \ref{odd-cycles}, and  Theorem \ref{existence} together imply that $\ppc(k)$ exists for all $k$.  By \cite[Theorem~3]{Laison13}, $\ppc(1)=3$.  Since $C_7$ can be $2$nd-power prime distance labeled with $0$, $4$, $3485$, $3124$, $2283$, $74$, and $25$, and $C_3$ cannot be $2$nd-power prime distance labeled by Proposition~\ref{C3}, $\ppc(2)=5$ or $\ppc(2)=7$.  We know no other values of $\ppc(k)$.

Recall that a graph $G$ is \textit{\textbf{outerplanar}} if $G$ can be drawn in the plane with no edge crossings and all vertices on the outside face.

\begin{lemma} \label{leaf-cycle}
If $G$ is a connected outerplanar graph with at least two cycles and no vertices of degree 1, then $G$ has a cycle $C$ with one or two vertices $x$ or $x$ and $y$, such that $G-x$ (respectively $G-\{x,y\}$) has at least two connected components, one of which is $C-x$ (respectively $C-\{x,y\}$).
\end{lemma}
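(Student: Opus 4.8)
The plan is to exploit the structure of outerplanar graphs via their weak dual. Recall that when $G$ is drawn with all vertices on the outer face and no crossings, the bounded faces of the drawing, together with adjacency along shared edges, form the \emph{weak dual} $T$ of $G$. A standard fact is that $T$ is a forest, and when $G$ is $2$-connected it is a tree; more generally the bounded faces of a maximal outerplanar graph triangulate a polygon and the weak dual is a tree whose leaves correspond to ``ear'' triangles. First I would reduce to the $2$-connected case: since $G$ has no degree-$1$ vertices and at least two cycles, I would argue that it suffices to locate the cycle $C$ inside a single block (maximal $2$-connected subgraph) of $G$, because a cut vertex already provides the separation we want. So the heart of the argument is to find, within a $2$-connected outerplanar graph with at least two bounded faces, a bounded face $C$ that is attached to the rest along just one or two boundary vertices.

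The key steps, in order, are as follows. First I would form the weak dual $T$ and note it is a tree with at least two vertices (since $G$ has at least two cycles, hence at least two bounded faces, at least within some block). Second, I would take a leaf $f$ of $T$; the bounded face $C_f$ corresponding to $f$ shares exactly one edge $e=uv$ with the rest of $G$, since $f$ has exactly one neighbor in $T$. Third, I would analyze how $C_f$ attaches to $G-C_f$: the only vertices of $C_f$ that can be incident to edges or faces outside $C_f$ are the endpoints $u$ and $v$ of the shared edge $e$, together with possibly chords, but outerplanarity forbids the interior vertices of the leaf face from having neighbors outside the face. Thus every vertex of $C$ other than $u$ and $v$ has all of its neighbors on $C$. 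Fourth, I would set $\{x,y\}=\{u,v\}$ (or pick the single attaching vertex $x$ when the block meets the rest of $G$ at one cut vertex) and verify that deleting $x$ and $y$ disconnects the interior path of $C$ from the remainder: the arc $C-\{x,y\}$ becomes an isolated component because none of its vertices reach $G-\{x,y\}$ except through $x$ or $y$.

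The main obstacle I anticipate is handling the degenerate and boundary cases cleanly rather than the central idea. Specifically, I must ensure the two separating vertices are distinct and genuinely separate the graph: if $C$ is a triangle the arc $C-\{x,y\}$ is a single vertex, which still counts as a component, and I should confirm the hypothesis ``no vertices of degree $1$'' is what rules out pathological leaf faces where $C-x$ would be empty or where the shared structure is a single vertex rather than an edge. I would also need to reconcile the ``one vertex $x$'' versus ``two vertices $x,y$'' alternatives with the block structure: a cut vertex of $G$ gives the one-vertex case directly, whereas a leaf face of a $2$-connected block gives the two-vertex case via the endpoints of the dual edge. Care is needed to show that in the one-vertex case the component $C-x$ is nonempty and really separated, which again uses the absence of degree-$1$ vertices to guarantee $C$ is a genuine cycle with at least three vertices. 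Once these cases are dispatched, the separation claim $G-x$ (respectively $G-\{x,y\}$) having $C-x$ (respectively $C-\{x,y\}$) as a connected component follows directly from outerplanarity of the leaf face.
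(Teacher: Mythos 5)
Your outline follows the same route as the paper: pass to a leaf block via the block--cutpoint tree, then take a leaf of the weak dual of that block and use the two endpoints of its unique shared edge as the separating pair. However, there is a genuine gap in your third step, where you assert that ``outerplanarity forbids the interior vertices of the leaf face from having neighbors outside the face.'' That is true inside the block, but false in $G$: an interior vertex of the leaf face may be the cut vertex of $G$ that attaches the leaf block $B$ to the rest of the graph. Concretely, let $B$ consist of two triangles $uvw_1$ and $uvw_2$ sharing the edge $uv$, and attach a further triangle $w_1st$ at $w_1$. The face $uvw_1$ is a leaf of the weak dual of $B$ with shared edge $uv$, yet $C-\{u,v\}=\{w_1\}$ is not a component of $G-\{u,v\}$, since $w_1$ still reaches $s$ and $t$. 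So you cannot take an arbitrary leaf face; you must choose one avoiding the cut vertex of $G$ in $B$. The paper does exactly this, and handles the residual case where every leaf face contains that cut vertex (there the weak dual is forced to be a path, and one checks the cut vertex then lies on the shared edge of an end face, so it becomes one of $x,y$ automatically). Your proposal never performs this selection, so as written the chosen $C$ need not satisfy the conclusion.

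A second, smaller confusion is your dichotomy between the one-vertex and two-vertex cases. You tie the one-vertex case to ``when the block meets the rest of $G$ at one cut vertex,'' but every leaf block of a non-$2$-connected graph does that; the one-vertex case actually corresponds to the leaf block being a single cycle (then $C=B$ and $x$ is the cut vertex), while the two-vertex case arises when the leaf block has at least two bounded faces and $x,y$ are the endpoints of the dual-leaf's shared edge. As stated, your reduction would suggest always taking $x$ to be the cut vertex, but then $G-x$ has $B-x$, not $C-x$, as a component unless $B$ is itself a cycle. Fixing both points essentially reproduces the paper's case analysis.
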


\begin{proof}
The block-cutpoint graph of $G$ is a tree \cite{West96}, so we consider a leaf-block $B$ of $G$.  If $B$ is a single cycle, we may take $C=B$.  Otherwise, since $B$ is 2-connected, the weak dual $D$ of $B$ is a tree with at least two leaves \cite{West96}.  We consider the cycles of $B$ corresponding to the leaves of $D$.  There are at least two of these cycles.  If they all contain the cut vertex of $G$ in $B$, then $D$ is a path, and we may take the cycle of $B$ corresponding to either endpoint of $D$ as $C$.  Otherwise we take $C$ to be any cycle of $B$ corresponding to a leaf of $D$ that doesn't contain the cut vertex of $G$ in $B$.
\end{proof}

\begin{theorem}
Given a natural number $k$, if $G$ is an outerplanar graph with girth at least $\ppc(k)+6$, then $G$ is a strict $k$th-power prime distance graph. \label{outerplanar}
\end{theorem}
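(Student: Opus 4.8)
The plan is to induct on the number of edges of $G$, repeatedly stripping off a leaf cycle supplied by Lemma~\ref{leaf-cycle} and labeling the remainder first. Since translating every label of a connected component by a fixed integer preserves all edge differences, I may assume $G$ is connected (label the components separately and shift them into disjoint ranges of integers). If $G$ has a vertex $v$ of degree $1$, I delete it, label $G-v$ by induction (still outerplanar with girth at least $\ppc(k)+6$, since deleting a vertex cannot shorten any remaining cycle), and reattach $v$ by choosing a prime $p$ large enough that $L(u)+p^k$ avoids the finite set of labels already used, where $u$ is the neighbor of $v$; infinitely many primes are available, so this is always possible. The same greedy extension handles the base case in which $G$ is a forest. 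If $G$ is a single cycle, it is some $C_m$ with $m\ge \ppc(k)+6>\ppc(k)$, which is labelable by the definition of $\ppc(k)$.

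For the main step, assume $G$ is connected, has minimum degree at least $2$, and contains at least two cycles. Apply Lemma~\ref{leaf-cycle} to obtain a cycle $C$ of length $m$ meeting the rest of $G$ only in a single cut vertex $x$, or in two vertices $x,y$; in the two-vertex case these are the endpoints of the edge along which $C$ attaches to the rest of $G$, so $xy\in E(H)$ and $|L(x)-L(y)|$ will already be an exact prime $k$th power. Let $H$ be $G$ with the interior vertices of $C$ removed (retaining $x$, and $y$). Then $H$ is connected, outerplanar, has girth at least $\ppc(k)+6$ and strictly fewer edges, so by induction it admits a strict prime $k$th-power labeling $L$; write $F=L(V(H))$ for the finite set of forbidden labels. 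It remains to extend $L$ across the path $C-x$ (respectively $C-\{x,y\}$) so that every new vertex gets a label that is distinct, avoids $F$, and makes each new edge difference an exact prime $k$th power, while respecting the pinned value $L(x)$ (and $L(y)$). Because $m\ge \ppc(k)+6$, the cycle carries at least six vertices of slack beyond the labelability threshold, and this slack is what makes the extension possible.

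To carry out the extension I split on the parity of $m$. If $m$ is even, Theorem~\ref{even-cycles} gives a labeling of $C_m$ whose labels may be taken arbitrarily large; translating it so that $x$ receives the value $L(x)$ (and, in the two-vertex case, taking the first prime in that construction to be the one realizing $|L(x)-L(y)|$ so that the edge $xy$ matches) leaves every other label huge and hence automatically disjoint from $F$. If $m$ is odd, I route a short buffer path of a few edges from each attachment vertex, choosing prime $k$th-power steps one at a time so that the few buffer-interior labels dodge the finite set $F$, until both buffers reach a working region of very large labels. I then fill in the remaining interior with a labeling built from a core odd cycle of length at least $\ppc(k)$ (labelable by definition of $\ppc(k)$) inflated by large-prime detours as in Theorem~\ref{odd-cycles}; the detours simultaneously raise the total edge count to $m$ and push every one of these labels into the working region, where avoidance of $F$ is automatic. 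The buffers consume the six extra vertices, so the core still has length at least $\ppc(k)$.

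The main obstacle is precisely the odd case: one must \emph{simultaneously} close the cycle (force the signed sum of the edge prime $k$th powers to equal the prescribed endpoint difference), match the pinned labels at the attachment vertices, and avoid $F$. A naive attempt to connect a pinned vertex to a rigid large-prime core through a single prime-power edge on each side forces a relation of the form $p^k+s^k=q^k$ among prime $k$th powers, which Fermat's Last Theorem (cf.\ Proposition~\ref{C3}) forbids for $k\ge 3$; this is exactly why one buffer edge is insufficient and why the construction must lean on the genuinely odd closure of Theorem~\ref{existence}, whose relation $3^k a+5^k b=2^k$ uses integer multiples of $3^k$ and $5^k$ rather than a single Fermat triple. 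Verifying that the buffer lengths, the core length, and the detour count can always be reconciled with the fixed parity and value of $m$, so that the six vertices of slack suffice in every case, is the crux of the argument.
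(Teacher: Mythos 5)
Your reduction to the case of a connected graph with minimum degree at least $2$ and at least two cycles, via Lemma~\ref{leaf-cycle} and induction on the number of edges, matches the paper, and your even-length case is essentially sound. The gap is the odd case, which is the real content of the theorem (and the reason the hypothesis is girth at least $\ppc(k)+6$ rather than $\ppc(k)$): you describe ``buffers,'' a ``core,'' and ``detours,'' and then state that verifying these can be reconciled ``is the crux of the argument'' without doing so. Concretely, after you run a buffer path of prime $k$th-power steps out of $x$ and another out of $y$ into the region of huge labels, their endpoints sit at two essentially arbitrary large values $A$ and $B$, and you must join them by a path with a \emph{prescribed} number of edges whose signed sum of prime $k$th powers equals $B-A$. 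Neither Theorem~\ref{odd-cycles} nor Theorem~\ref{existence} supplies such a path: both produce closed cycles, i.e., signed sum $0$ between identical endpoints. So the step ``fill in the remaining interior with a core odd cycle inflated by detours'' does not close the cycle as stated, and nothing in your sketch forces closure.

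The paper resolves exactly this point with a single construction that needs no parity split. Writing $|C|=m+6$ with $m\geq \ppc(k)$, it takes a strict labeling $L_2$ of an auxiliary cycle $C_2$ on the $m$ interior vertices $x_1,\dots,x_m$ (which exists by the definition of $\ppc(k)$ since $m\geq\ppc(k)$; parity is irrelevant), chooses new primes $q,r$ with $q^k,r^k$ larger than the sum of the absolute values of all labels used so far, and routes $C$ as $x,y,z,x_1,\dots,x_m,a,b,c$ with $L(z)=p^k+r^k$, $L(x_1)=p^k+r^k+s^k$, $L(x_i)=L_2(x_i)+p^k+r^k+q^k$ for $i>1$, $L(a)=p^k+r^k+q^k$, $L(b)=r^k+q^k$, and $L(c)=r^k$, where $p^k=L_1(xy)$ and $s^k=L_2(x_1x_2)$. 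The offsets telescope, so every edge difference is one of $p^k,q^k,r^k,s^k$ or an edge difference of $L_2$, closure around $C$ is automatic, and the uniform shift by $p^k+r^k+q^k$ pushes every interior label above everything in $F$. That single splice is what the six extra girth units buy; your proposal correctly identifies the difficulty but does not supply this (or any) mechanism, so as written the proof is incomplete.
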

\begin{proof}

Suppose $G$ is an outerplanar graph with girth at least $\ppc(k)+6$. If $G$ is a single cycle, then $G$ is a strict $k$th-power prime distance graph by Theorems~\ref{even-cycles} and \ref{odd-cycles}.  If $G$ has a vertex $x$ of degree 1 with neighbor $y$, then by induction we may $k$th-power prime distance label $G-x$, and then label $xy$ with $p^k$ for any prime $p$ larger than the labels on $G-x$. If $G$ is not connected, we may $k$th-prime power distance label each connected component of $G$.


%
Therefore we may assume that $G$ is a connected outerplanar graph with at least two cycles and no vertices of degree 1.
By Lemma~\ref{leaf-cycle}, $G$ has a cycle $C$ with one or two vertices $x$ or $x$ and $y$, such that $G-x$ (respectively $G-\{x,y\}$) has at least two connected components, one of which is $C-x$ (respectively $C-\{x,y\}$).  In either case we have a vertex $x$ of $G$.  If we don't yet have a vertex $y$ of $G$, let $y$ be a vertex in $C$ incident to $x$.

By induction, let $L_1$ be a strict $k$th-power prime distance labeling of $G'=(G-C)\cup xy$ with $L_1(x)=0$ and $L_1(y)=p^k$ for some prime $p$.

Suppose $C$ has $m+6$ vertices.  Since the girth of $G$ is at least $\ppc(k)+6$, we have $m \geq \ppc(k)$.  Say the vertices of $C$ are $x$, $y$, $z$, $x_1$, $x_2$, $\ldots$, $x_{m-1}$, $x_m$, $a$, $b$, and $c$, in cyclic order.  Now consider a cycle $C_2$ with vertices $x_1$, $x_2$, $\ldots$, $x_m$.  Since $C_2$ has $m \geq \ppc(k)$ vertices, $C_2$ has a strict $k$th-power prime distance labeling $L_2$, and we may choose $L_2(x_1)=0$.

We now define a labeling of $G$, which we claim is a strict $k$th-power prime distance labeling.  Let $q$ and $r$ be primes such that $q^k$ and $r^k$ are larger than the sum of the absolute values of all labels in $L_1$ and $L_2$.  Suppose $L_1(xy)=p^k$ and $L_2(x_1x_2)=s^k$ for some primes $p$ and $s$.  For a vertex $u \in G-C$, define $L(u)=L_1(u)$.  For a vertex $x_i \in C$ with $1<i \leq m$, define $L(x_i)=L_2(x_i)+p^k+r^k+q^k$. Finally, define $L(z)=p^k+r^k$, $L(x_1)=p^k+r^k+s^k$, $L(a)=p^k+r^k+q^k$, $L(b)=r^k+q^k$, and $L(c)=r^k$.

We check that vertex labels in $L$ are distinct.  Since the labels on $C-\{x,y\}$ were chosen to be larger than the labels on $G-C$, labels on $C-\{x,y\}$ are distinct from labels on $G-C$.  Since labels on $G-C$ in $L$ are the same as labels on $G-C$ in $L_1$, they are distinct by induction, and likewise for the labels on $x_i \in C$ with $1<i \leq m$.  Again since $q^k$ and $r^k$ are larger than the sum of labels in $L_1$ and $L_2$, the labels on $a$, $b$, $c$, $z$, and $x_1$ are also distinct from each other and from other labels in $G$.

Finally we check that if $uv \in G$, $L(uv)$ is a $k$th power of a prime.  Again this is true by induction if $uv \in G-C$ or $uv \in C-\{a,b,c,x,y,z,x_1\}$.  We check the remaining edges directly: $L(xy)=p^k$, $L(yz)=r^k$, $L(zx_1)=s^k$, $L(x_1x_2)=q^k$, $L(x_ma)=L_2(x_1x_m)$, $L(ab)=p^k$, $L(bc)=q^k$, and $(xc)=r^k$.

\end{proof}

\section{Open Questions}

We conclude with a list of open questions.

\begin{enumerate}
\item The converse to Lemma~\ref{colorable-lemma} is false: in \cite{Laison13}, the authors gave an example of a 4-chromatic graph which is not a prime distance graph, i.e.~a graph which is $2^{k+1}$-colorable with no $k$-prime product labeling for $k=1$.  Do these graphs exist for other values of $k$?

\item Which 3-chromatic and 4-chromatic graphs $G$ have $\ppn(G)=1$?

\item Which 3-chromatic and 4-chromatic graphs $G$ have $\ppn(G)>2$?

\item By the remarks after Theorem~\ref{existence}, $\ppc(2)=5$ or $\ppc(2)=7$.  Which of these is true, and what is $\ppc(k)$ for any integer $k > 2$?

\item Theorem~\ref{outerplanar} describes a family of outerplanar strict $k$th-power prime distance graphs.  Which outerplanar graphs are strict $k$th-power prime distance graphs for some $k$?

\end{enumerate}

\bibliographystyle{plain}
\bibliography{prime-power-product-distance-graphs}

\def\cprime{$'$}
\begin{thebibliography}{10}

\bibitem{Cox94}
David~A. Cox.
\newblock Introduction to {F}ermat's last theorem.
\newblock {\em Amer. Math. Monthly}, 101(1):3--14, 1994.

\bibitem{Eggleton85}
R.~B. Eggleton, P.~Erd{\H{o}}s, and D.~K. Skilton.
\newblock Colouring the real line.
\newblock {\em J. Combin. Theory Ser. B}, 39(1):86--100, 1985.

\bibitem{Eggleton86}
R.~B. Eggleton, P.~Erd{\H{o}}s, and D.~K. Skilton.
\newblock Erratum: ``{C}olouring the real line'' [{J}. {C}ombin.\ {T}heory
  {S}er.\ {B} {\bf 39} (1985), no.\ 1, 86--100; {MR}0805458 (87b:05057)].
\newblock {\em J. Combin. Theory Ser. B}, 41(1):139, 1986.

\bibitem{Eggleton90}
R.~B. Eggleton, P.~Erd{\H{o}}s, and D.~K. Skilton.
\newblock Colouring prime distance graphs.
\newblock {\em Graphs Combin.}, 6(1):17--32, 1990.

\bibitem{Eggleton88}
Roger~B. Eggleton.
\newblock New results on {$3$}-chromatic prime distance graphs.
\newblock {\em Ars Combin.}, 26(B):153--180, 1988.

\bibitem{green08}
Ben Green and Terence Tao.
\newblock The primes contain arbitrarily long arithmetic progressions.
\newblock {\em Ann. of Math. (2)}, 167(2):481--547, 2008.

\bibitem{Laison13}
Joshua~D. Laison, Colin Starr, and Andrea Walker.
\newblock Finite prime distance graphs and 2-odd graphs.
\newblock {\em Discrete Math.}, 313(20):2281--2291, 2013.

\bibitem{Voigt94}
M.~Voigt and H.~Walther.
\newblock Chromatic number of prime distance graphs.
\newblock {\em Discrete Appl. Math.}, 51(1-2):197--209, 1994.

\bibitem{West96}
Douglas~B. West.
\newblock {\em Introduction to graph theory}.
\newblock Prentice Hall Inc., Upper Saddle River, NJ, 1996.

\bibitem{Yegnanarayanan02}
V.~Yegnanarayanan.
\newblock On a question concerning prime distance graphs.
\newblock {\em Discrete Math.}, 245(1-3):293--298, 2002.

\end{thebibliography}

\end{document}